\title{Characterizing stationary 1+1 dimensional lattice polymer models}
\author{Hans Chaumont\thanks{Department of Mathematics, University of Wisconsin. Madison, WI, USA.\newline \href{mailto:chaumont@wisc.edu}{chaumont@wisc.edu}} \and Christian Noack\thanks{Department of Mathematics, University of Wisconsin. Madison, WI, USA.\newline \href{mailto:cnoack@wisc.edu}{cnoack@wisc.edu}}}
\newtheorem{theorem}{Theorem}[section]
\newtheorem{corollary}[theorem]{Corollary}
\newtheorem{lemma}[theorem]{Lemma}
\newtheorem{definition}[theorem]{Definition}
\newtheorem{proposition}[theorem]{Proposition}
\theoremstyle{remark}
\newtheorem{remark}[theorem]{Remark}
\numberwithin{equation}{section}
\newcommand{\p}{\partial} 
\newcommand{\Z}{\mathbb{Z}}
\newcommand{\N}{\mathbb{N}}
\newcommand{\R}{\mathbb{R}}
\renewcommand{\t}{\widetilde}
\newcommand{\Os}{O_1\times O_2\times O_3}
\newcommand{\scaleOs}{c_1O_1\times c_2O_2\times c_1O_3}
\newcommand{\ID}{(R^1,R^2,Y)}
\newcommand{\flipID}{\left(R^2,R^1,h(Y)\right)}
\newcommand{\scaleID}{(c_1R^1,c_2R^2,c_1Y)}
\begin{document}
\maketitle

\begin{abstract}
Motivated by the study of directed polymer models with random weights on the square integer lattice, we define an integrability property shared by the log-gamma, strict-weak, beta, and inverse-beta models. This integrability property encapsulates a preservation in distribution of ratios of partition functions which in turn implies the so called Burke property. We show that under some regularity assumptions, up to trivial modifications, there exist no other models possessing this property.
\end{abstract}

\noindent \textbf{Keywords:} directed polymer; exactly solvable models; integrable models; Burke's theorem; partition function.

\noindent \textbf{AMS MSC 2010:} 60K35; 60K37; 82B23; 82D60.

\section{Introduction}

One method which has been used to study certain models of percolation and polymers is to introduce a version of the model with boundary conditions that possesses a stationarity property. This stationarity property allows for the exact computation of some quantities of interest, such as the free energy. In \cite{OY2001} O'Connell and Yor introduce a model for a directed polymer in a Brownian environment with a Burke-type stationarity property.  In \cite{SV2010} Sepp{\"a}l{\"a}inen and Valk\'o use this stationarity to find bounds on the fluctuation exponents of the free energy and the fluctuation of the paths. In \cite{CG2006} Cator and Groeneboom  relate a stationary version of the Hammersley process to the location of a second class particle and determine the order of the variance of the longest weakly north-east path. In \cite{BCS2006} Bal\'azs, Cator, and Sepp\"al\"ainen use a stationary version of the last passage growth model with exponential weights to study the variance of the last passage time and transversal fluctuations of the maximal path. 

We define the integrability property $T^{h,Y}$-invariance (Definition \ref{definition Thy invariance}) which encapsulates this stationarity in the setting of lattice directed polymers. This property implies a preservation in distribution of ratios of partition functions. The first model discovered possessing this property is the log-gamma model, introduced by Sepp{\"a}l{\"a}inen in \cite{S2012}.  In his paper $T^{h,Y}$-invariance is used to prove the conjectured values for the fluctuation exponents of the free energy and the polymer path in the stationary point-to-point case and to prove upper bounds for the exponents in the point-to-point and point-to-line cases without boundary conditions. In \cite{GS2013} Georgiou and Sepp\"al\"ainen use $T^{h,Y}$-invariance to obtain large deviation results for the log-gamma polymer. In the setting of directed polymer models, this is the first instance where precise large deviation rate functions for the free energy were derived.

Thereafter three additional models admitting $T^{h,Y}$-invariant versions were found: the strict-weak model, introduced simultaneously by Corwin, Sepp{\"a}l{\"a}inen, and Shen in \cite{CSS2015} and O'Connell and Ortmann in \cite{OO2015}, the beta model, introduced by Barraquand and Corwin in \cite{BC2016} as the beta RWRE, and the inverse-beta model, introduced by Thiery and Le Doussal in \cite{TL2015}. The stationary versions of these models were given by Bal\'azs, Rassoul-Agha, and Sepp{\"a}l{\"a}inen in \cite{BRS2016} for the beta model, Thiery in \cite{T2016} for the inverse-beta model, and by Corwin, Sepp{\"a}l{\"a}inen, and Shen in \cite{CSS2015} for the strict-weak model.

In this paper we present a uniqueness result for $T^{h,Y}$-invariant models. That is, under some regularity assumptions and up to the two natural modifications of reflection and scaling, the log-gamma, strict-weak, beta, and inverse-beta are the only $T^{h,Y}$-invariant models.

In the forthcoming paper \cite{CN} we use $T^{h,Y}$-invariance along with a Mellin transform framework to simultaneously prove the conjectured value for the fluctuation exponent of the free energy and the upper bound for the polymer path fluctuations in the stationary point-to-point version of these four models.

\subsection{The polymer model}
The directed polymer in a random environment, first introduced by Huse and Henley \cite{HH1985}, models a long chain of molecules in the presence of random impurities. Imbrie and Spencer \cite{IS1988} formulated this model as a random walk in a random environment. See the lectures by Comets \cite{Comets2017} for a survey of results on directed polymers. We consider a class of 1+1-dimensional directed polymers on the integer lattice.

Notation: $\N= \{1,2,\ldots\}$, $\Z_+ = \{0,1,\ldots\}$, and $\R$ denotes the real numbers.
On each edge $e$ of the $\Z_+^2$  lattice we place a positive random weight. For $x\in \N^2$, let $u_x$ and $v_x$ denote the horizontal and vertical incoming edge weights. We assume that the collection of pairs $\{(u_x,v_x)\}_{x\in \N^2}$ is independent and identically distributed, but do not insist that $u_x$ is independent of $v_x$ (in fact we will later assume $v_x$ is a function of $u_x$). Call this collection the \emph{bulk weights}. For $x\in \N\times \{0\}$, let $R^1_x$ denote the horizontal incoming edge weight, and for $x\in \{0\}\times \N$, let $R^2_x$ denote the vertical incoming edge weight. We assume the collections $\{R^1_x\}_{x\in \N\times \{0\}}$ and $ \{R^2_y\}_{y\in \{0\}\times \N}$ are independent and identically distributed, and refer to them as the \emph{horizontal} and \emph{vertical} \emph{boundary weights}, respectively. We further assume that the horizontal boundary weights, the vertical boundary weights, and the bulk weights are independent of each other. This assignment of edge weights is illustrated in Figure \ref{fig-weights}.

\begin{figure}[ht]
  \centering
  \begin{tikzpicture}
    
   \draw [help lines] (0,0) grid (5,5);	  
   \draw [thick, <->] (0,5) -- (0,0) -- (5,0);
   \draw [thick] (2,3) -- (3,3) -- (3,2);
   \node [left,scale=.9] at (0,2.5) {$R^2_{0,j}$};
   \node [below,scale=.9] at (2.5,0) {$R^1_{i,0}$};
   \node [above,scale=.9] at (2.5,3) {$u_x$};
   \node [right,scale=.9] at (3,2.5) {$v_x$};
   \draw[fill] (3,3) circle [radius=.05];
   \node [above right] at (2.91,2.91) {$x$};

  \end{tikzpicture}
    \caption{Assignment of edge weights.}
  \label{fig-weights}
\end{figure}
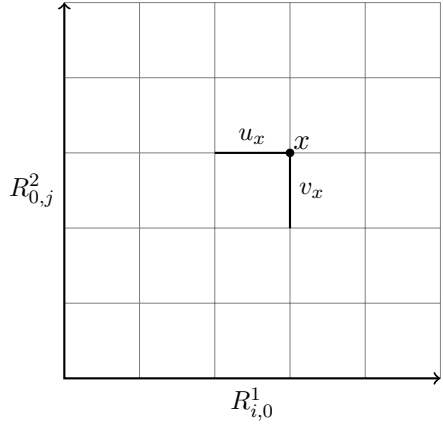

For $(m,n)\in \Z^2_+\setminus \{(0,0)\}$, let $\Pi_{m,n}$ be the collection of all up-right paths from $(0,0)$ to $(m,n)$. See Figure \ref{up-right path} for an example of such a path. We identify paths $x_\centerdot = (x_0, x_1, \ldots, x_{m+n})$ by
their sequence of vertices, but also associate to paths their sequence
of edges $(e_1, \ldots, e_{m+n})$, where $e_i = \{x_{i-1}, x_i\}$. The point-to-point partition function for the directed polymer is defined as
\[ Z_{m,n} :=
\sum_{x_\centerdot \in \Pi_{m,n}} \prod_{i=1}^{m+n}
\omega_{e_i} \qquad \text{for } (m,n) \in Z_+^2\setminus\{(0,0)\},\] 
where $\omega_e$ is the weight associated to the edge $e$. At the origin, define $Z_{0,0}:=1$.
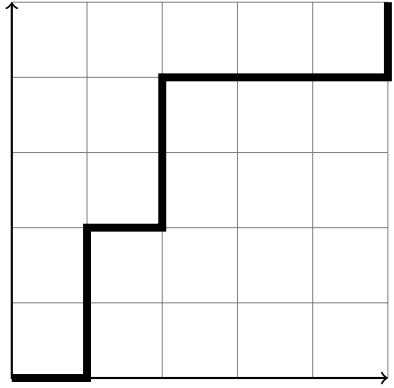
\begin{figure}[ht]
\centering
	\begin{tikzpicture}
  		\draw [help lines] (0,0) grid (5,5);	  
   		\draw [thick, <->] (0,5) -- (0,0) -- (5,0);
   		\draw [line width=3pt] (0,0) -- (1,0) -- (1,1) -- (1,2) -- (2,2) -- (2,3) -- (2,4) -- (3,4)--(4,4) -- (5,4) -- (5,5);
	\end{tikzpicture}
	\caption{An up-right path from $(0,0)$ to $(5,5)$.}
    \label{up-right path}
\end{figure}

\noindent Write $\alpha_1=(1,0)$, $\alpha_2=(0,1)$. The partition functions satisfy the recurrence relation
\begin{equation}
Z_x=u_xZ_{x-\alpha_1}+v_x Z_{x-\alpha_2} \qquad \text{for } x\in\mathbb{N}^2.\label{rec0}
\end{equation}
For $k=1,2$ define ratios of partition functions
\begin{align*}
R^k_x:=\frac{Z_x}{Z_{x-\alpha_k}}\qquad \text{for all  }x\text{ such that }  x-\alpha_k\in\mathbb{Z}_+^2.
\end{align*}
Note that these extend the definitions of $R^1_{i,0}$ and $R^2_{0,j}$, since for example $Z_{i,0}=\prod_{k=1}^{i} R^1_{k,0}$.
The recurrence relation \eqref{rec0} yields the recursions
\begin{align}
\begin{split}
R^1_x &= u_x+v_x\frac{R^1_{x-\alpha_2}}{R^2_{x-\alpha_1}}\\
R^2_x &= u_x\frac{R^2_{x-\alpha_1}}{R^1_{x-\alpha_2}}+v_x
\end{split}
\qquad \text{for } x\in\mathbb{N}^2.
\label{recursions}
\end{align}

We look to exploit these recursions to obtain more structure of the ratios $R^1_x$ and $R^2_x$, which in turn allows us to analyze quantities of interest such as the free energy, $\log Z_{m,n}$. The notation $X \stackrel{d}{=} Y$ is used to specify that random vectors $X$ and $Y$ have the same distribution. We look for cases where $(R^1_x,R^2_x)\stackrel{d}{=}(R^1_{x-\alpha_2},R^2_{x-\alpha_1})$, under the assumption that $u_x$ and $v_x$ have a functional dependence of the form $(u_x,v_x)=\big(Y_x,h(Y_x)\big)$ for some positive random variable $Y_x$ and positive function $h$. We further assume that there exist positive random variables $R^1,R^2,Y$ such that the horizontal boundary weights, the vertical boundary weights, and the bulk weights are distributed as $R^1$, $R^2$, and $\big(Y, h(Y)\big)$, respectively.

When $Y$ is a random variable taking values in the domain of $h$ and $(R^1, R^2)$ is a random vector taking values in $(0,\infty)^2$, define the random vector
\begin{equation}\label{eq-ThY}
T^{h,Y}(R^1,R^2):=\big(Y+h(Y)\frac{R^1}{R^2},\, Y\frac{R^2}{R^1}+h(Y)\big).  
\end{equation}
Note that with $(u_x, v_x) = \big(Y_x, h(Y_x)\big)$, the recursive equations \eqref{recursions} imply
\begin{equation}
(R^1_x,R^2_x)=T^{h,Y_x}(R^1_{x-\alpha_2},R^2_{x-\alpha_1})\qquad \text{for all } x\in\mathbb{N}^2.\label{rec3}
\end{equation}
\begin{definition}\label{definition Thy invariance}
Let $O_3\subset (0,\infty)$, $h: O_3 \to (0,\infty)$, and assume the random variable $Y$ takes values in $O_3$. Let $(R^1, R^2)$ be a random vector taking values in $(0,\infty)^2$ that is independent of $Y$. We say that $(R^1, R^2)$ is {\rm $T^{h,Y}$-invariant} if $T^{h,Y}(R^1,R^2) \stackrel{d}{=} (R^1, R^2)$.
\end{definition}
\noindent Definition \ref{definition Thy invariance}, while stated in terms of the random variables $(R^1, R^2)$ and $Y$, is really a property of the distributions of $(R^1, R^2)$ and $Y$.

If $(R^1,R^2)$ is $T^{h,Y}$-invariant with $R^1$ independent of $R^2$, then \eqref{rec3} and an induction argument imply that the polymer model possesses a form of stationarity:
\begin{align}\label{equation stationarity}
(R^1_x,R^2_x)\stackrel{d}{=} (R^1,R^2)\qquad \text{for all } x\in \mathbb{N}^2.
\end{align}
Although our two main theorems require $R^1$ and $R^2$ to be independent, the results in Section \ref{section equivalences} hold without this independence.
\subsection{Main results}

Our first main result, Theorem \ref{thm-h-linear}, consists of showing that, under some regularity assumptions, $T^{h,Y}$-invariance can only occur if $h$ is of the form $h(y)=a+by$ for real numbers $a,b$ satisfying $a\vee b>0$.  Our second main result, Theorem \ref{thm-classify-linear}, consists of showing that if $h$ has this form, then $T^{h,Y}$-invariance  only arises as a modification of the four known invariant models (described in \eqref{model-IG} through \eqref{model-IB}).

Given a real valued function $f$ we call  $\{x: f(x) \neq 0\}$ the support of $f$.   Note that we do not insist on taking the closure of this set. Define the non-random analogue of \eqref{eq-ThY}, 
\begin{equation}\label{equation - Thy}
T^{h,y}(r_1,r_2) := \big(y + h(y) \tfrac{r_1} {r_2}, y\tfrac{r_2}{r_1} + h(y)\big).\end{equation}

\begin{theorem}\label{thm-h-linear}
Let $R^1,R^2,Y$ be positive, independent random variables with respective densities $f_1, f_2, f_3$. Assume that the support of $f_j$ is $O_j\subset (0,\infty)$ for $j=1,2,3$, where each $O_j$ is open and $O_3$ is connected. Assume $f_1,f_2$ are twice differentiable on $O_1$ and $O_2$ respectively and that $f_3$ is three times differentiable on $O_3$. Suppose $h:O_3\to (0,\infty)$ is  four times differentiable, the mapping $\Os \owns (r_1,r_2,y)\mapsto T^{h,y}(r_1,r_2)$ surjects onto $O_1\times O_2$, and $\frac{r_2}{r_1} + h'(y) \neq 0$ for all $(r_1,r_2, y) \in \Os$. If $(R_1,R_2)$ is $T^{h,Y}$-invariant, then $h$ must be of the form $h(y) = a+by$, where $a,b$ are real numbers satisfying $a\vee b>0$.
\end{theorem}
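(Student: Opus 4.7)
The plan is to convert the distributional equality $T^{h,Y}(R^1,R^2)\stackrel{d}{=}(R^1,R^2)$ into a density identity and then extract a differential constraint forcing $h''\equiv 0$. The starting observation is algebraic: for $(u,v)=T^{h,y}(r_1,r_2)$ with $s=r_1/r_2$, one checks $u\,r_2=y r_2+h(y)r_1=v\,r_1$, so $u/v=s$. The transformation therefore preserves the ratio, $S:=R^1/R^2$ is invariant under $T^{h,Y}$, and the invariance of $(R^1,R^2)$ reduces to an equality of conditional distributions of $U=Y+h(Y)S$ and $R^1$ given $S$. Writing $u=y+h(y)s$, $v=u/s$, $L=1+h'(y)s$, and using the change of variables $(r_1,r_2)\mapsto(r_1,s)$ together with the assumption $L\neq 0$ (which makes $y\mapsto y+h(y)s$ a local diffeomorphism in $y$) and the surjectivity hypothesis, one obtains the key identity
\[
f_3(y)=C(s)\,f_1(u)\,f_2(v)\,u\,|L|,
\]
where $C(s)$ is a normalization depending only on $s$.

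I would then pass to $(u,v)$-coordinates; the Jacobian of $(y,s)\mapsto(u,v)$ equals $Lu/s^2\neq 0$, so the map is locally invertible. Taking logarithms and setting $\phi_i=\log f_i$, $\Psi(u,v)=\phi_3(y(u,v))-\log L$, the identity reads $\Psi(u,v)-\log u=\phi_1(u)+\phi_2(v)+\log C(u/v)$. A short calculation shows that a function $F(u,v)$ admits a decomposition $F=A(u)+B(v)+D(u/v)$ if and only if $v^2 F_{uv}$ is a function of $s=u/v$ alone; equivalently, $u(v^2 F_{uv})_u+v(v^2 F_{uv})_v\equiv 0$. Since $\partial_u\partial_v\log u=0$, the entire content of the invariance equation (modulo choice of $f_1,f_2,C$) is the Euler-homogeneity condition
\[
u\,(v^2\Psi_{uv})_u+v\,(v^2\Psi_{uv})_v\equiv 0.
\]

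Next I would compute $\Psi_{uv}$ explicitly using $y_u=y/(uL)$ and $y_v=h(y)s/(vL)$, obtained by implicitly differentiating $u=y+h(y)s$, together with the resulting formulas for $L_u,L_v,y_{uv}$ in terms of $h,h',h''$. Substituting expands the Euler condition into an identity linear in $\phi_3',\phi_3'',\phi_3'''$ with coefficients involving $h,h',h'',h''',h''''$ and the independent variables $(y,s)$; the differentiability orders match the hypotheses precisely. The identity must hold on a genuinely two-dimensional open set of $(y,s)$, so I would treat it as a functional equation in $s$ for each fixed $y$ and match coefficients of the natural basis (powers of $s$ and of $1/L$). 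Because the dependence on $s$ only enters through $s$ itself and through $L=1+h'(y)s$, I expect the coefficient matching to produce several ODE constraints at each $y$; the lowest-order such constraints should force the coefficient of the top derivative of $\phi_3$ to factor as $h''(y)\cdot P(y,s)$ with $P$ nonvanishing (the nondegeneracy of $L$ underlies this). This yields $h''\equiv 0$ on the open connected set $O_3$, hence $h(y)=a+by$. Positivity of $h$ on $O_3\subset(0,\infty)$ then rules out $a\le 0$ and $b\le 0$ simultaneously (else $a+by\le 0$ for $y>0$), so $a\vee b>0$.

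The main obstacle is the last step: executing the algebraic elimination cleanly. The Euler condition, after substituting the implicit-function expressions for $y_u,y_v,L_u,L_v$, is a lengthy rational expression in $h,\ldots,h''''$ and $\phi_3,\ldots,\phi_3'''$, and the work is to identify a coefficient-matching in $s$ that isolates the factor $h''(y)$ and to verify that its complement is nonzero under the stated hypotheses. The conceptual reason this must work is that affine $h$ is precisely what makes $L=1+bs$ independent of $y$, decoupling the $y$- and $s$-dependence in exactly the way the $(u,v)$-separability requires; any curvature in $h$ introduces a $y$-dependent contribution to $L$ that the separable ansatz cannot absorb.
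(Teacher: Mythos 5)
Your setup is sound and, after a change of coordinates, closely parallels the paper's: the preserved ratio, the reduction to a density identity with the Jacobian factor $|L|$, and the idea of applying a mixed partial derivative to annihilate the $f_1$, $f_2$ and ratio-only contributions are exactly the devices the paper uses (there, via the involution $T^h$, the identity $q\circ T^h=q$ with $q=\tfrac{r_2}{|L|}\rho$, contraction of the differentiated identity against $[r_1,r_2,0]^T$, and then $\partial^2/\partial r_1\partial r_2$). The final positivity argument for $a\vee b>0$ is also fine. The problem is that the proof stops exactly where the theorem is actually proved: the algebraic elimination that forces $h''\equiv 0$ is not carried out, and you acknowledge this yourself. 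What you offer in its place is a conjecture — that ``the coefficient of the top derivative of $\phi_3$'' will factor as $h''(y)\cdot P(y,s)$ with $P$ nonvanishing — and that conjecture does not match how the elimination actually resolves. In the paper's computation, after clearing denominators one obtains a degree-four polynomial identity in $s$ that holds on an open set and hence for all real $s$; evaluating at the root $s=-h'(y)$ of $L$ isolates the coefficient of the \emph{highest power of} $1/L$, namely $\kappa_4(y)=2y\,h''(y)^2$, yielding $0=-8y\,h(y)\,h''(y)^3$. This decisive coefficient contains no derivative of $\phi_3$ at all; the terms involving $\phi_3',\phi_3'',\phi_3'''$ sit at lower powers of $1/L$ and are killed by the evaluation. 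Without performing the computation one cannot rule out that the coefficient you propose to examine vanishes identically, or that it involves $\phi_3$ in a way that merely constrains $f_3$ rather than $h$.

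Two further points need care even granting the computation. First, your ``coefficient matching in powers of $s$ and of $1/L$'' is not a well-posed independence statement, since $L$ is itself affine in $s$; the correct move is to clear all $L$-denominators to obtain a genuine polynomial in $s$, use that a polynomial vanishing on a nonempty open subset of $\R$ vanishes identically, and only then evaluate at $s=-h'(y)$ (a point generally \emph{outside} the set $Q(O)$ where the identity was derived). Second, your density identity requires $y\mapsto y+h(y)s$ to be \emph{globally} injective on $O_3$, not merely a local diffeomorphism; this follows from $L\neq 0$ only because $O_3$ is connected, and should be said explicitly, since otherwise the conditional density of $U$ given $S=s$ is a sum over preimages and the identity changes form.
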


\begin{remark}
If $(R^1,R^2,Y)$ has support $\Os$ and $(R^1,R^2)$ is $T^{h,Y}$-invariant, then the surjectivity condition is a natural assumption. As an example for when the assumption on $\frac{r_2}{r_1} + h'(y) \neq 0$ is satisfied, we can take $h$ to be any differentiable increasing function. 
Note that the assumptions do not require $O_1$ or $O_2$ to be connected.
\end{remark}
Before giving the second main result we give the form of each of the four known invariant models.

The notation $X\sim \text{Ga}(\alpha,\beta)$ is used to denote that a random variable is gamma$(\alpha,\beta)$ distributed, i.e.\ has density $\Gamma(\alpha)^{-1} \beta^\alpha x^{\alpha-1}e^{-\beta x}$ supported on $(0,\infty)$, where $\Gamma(\alpha) = \int_0^\infty x^{\alpha-1} e^{-x} dx$ is the gamma function. $X\sim \text{Be}(\alpha,\beta)$ is used to say that $X$ is beta$(\alpha,\beta)$ distributed, i.e.\ has density $\frac{\Gamma(\alpha+\beta)}{\Gamma(\alpha) \Gamma(\beta)} x^{\alpha-1}(1-x)^{\beta-1} $ supported on $(0,1)$. We then use $X\sim \text{Ga}^{-1}(\alpha,\beta)$ and $X\sim \text{Be}^{-1}(\alpha,\beta)$ to denote that $X^{-1}\sim \text{Ga}(\alpha,\beta)$ and $X^{-1}\sim \text{Be}(\alpha,\beta)$, respectively. We also use $X\sim \left(\text{Be}^{-1}(\alpha,\beta) -1\right)$ to denote that $X+1\sim\text{Be}^{-1}(\alpha,\beta)$. The symbol $\otimes$ is used to denote (independent) product distribution.

\begin{itemize}
\item \textbf{Inverse-gamma}: This is also known as the log-gamma model. Assume $\mu>\lambda>0, \, \beta>0$ and
\begin{equation}
\begin{gathered}
(R^1,R^2,Y)\sim \text{Ga}^{-1}(\mu-\lambda,\beta)\otimes \text{Ga}^{-1}(\lambda,\beta)\otimes \text{Ga}^{-1}(\mu,\beta).\label{model-IG}
\end{gathered}
\end{equation}
Then $(R^1,R^2)$ is $T^{h,Y}$-invariant, where $h(y) = y$. (See Lemma 3.2 of \cite{S2012}.)
\item \textbf{Gamma}: This is also known as the strict-weak model. Assume $\lambda,\mu,\beta>0$ and 
\begin{equation}
\begin{gathered}
(R^1,R^2,Y)\sim \text{Ga}(\mu+\lambda,\beta)\otimes \text{Be}^{-1}(\lambda,\mu)\otimes \text{Ga}(\mu,\beta).
\end{gathered}\label{model-G}
\end{equation}
Then $(R^1, R^2)$ is $T^{h,Y}$-invariant, where $h(y) = 1$. (See Lemma 6.3 of \cite{CSS2015}.)
\item \textbf{Beta}: 
Assume $\lambda,\mu,\beta>0$ and 
\begin{equation}
\begin{gathered}
(R^1,R^2,Y)\sim \text{Be}(\mu+\lambda,\beta)\otimes \text{Be}^{-1}(\lambda,\mu)\otimes \text{Be}(\mu,\beta).
\end{gathered}\label{model-B}
\end{equation}
Then $(R^1,R^2)$ is $T^{h,Y}$-invariant, where $h(y) = 1-y$. (See Lemma 3.1 of \cite{BRS2016}.)
\item \textbf{Inverse-beta}:
Assume $\mu>\lambda>0,\, \beta>0$ and 
\begin{equation}
\begin{gathered}
(R^1,R^2,Y)\sim \text{Be}^{-1}(\mu-\lambda,\beta)\otimes \left(\text{Be}^{-1}(\lambda,\beta+\mu-\lambda)-1\right)\otimes \text{Be}^{-1}(\mu,\beta).
\end{gathered}\label{model-IB}
\end{equation}
Then $(R^1,R^2)$ is $T^{h,Y}$-invariant, where $h(y) = y-1$. (See Proposition 3.1 of \cite{T2016}.)
\end{itemize}
The name of each model refers to the distribution of the bulk weights. We call these models the \textbf{four basic beta-gamma models}.

\begin{theorem}\label{thm-classify-linear}
Let  $O_j\subset (0,\infty)$ for $j=1,2,3$ and assume $h:O_3\rightarrow (0,\infty)$ has the form $h(y)=a+by$, where $a,b$ are real numbers satisfying $a\vee b>0$. Assume the mapping $\Os \owns (r_1,r_2,y)\mapsto T^{h,y}(r_1,r_2)$ surjects onto $O_1\times O_2$, and $R^1$, $R^2$, $Y$ are non-degenerate, independent random variables taking values in $O_1,\,O_2,\,O_3$ respectively.
\begin{enumerate}
\item If $a=0$ and $b>0$, then $(R^1, R^2)$ is $T^{h,Y}$-invariant if and only if $\left(R^1, \frac{1}{b}R^2, Y\right)$ is distributed as in \eqref{model-IG}.
\item If  $a>0$ and $b=0$, then $(R^1, R^2)$ is $T^{h,Y}$-invariant if and only if $\left(R^1, \frac{1}{a}R^2, Y \right)$ is distributed as in \eqref{model-G}.
\item If $a>0$, $b<0$, and $-b\notin \{\frac{y}{x}:(x,y)\in O_1\times O_2\}$, then $(R^1, R^2)$ is $T^{h,Y}$-invariant if and only if either $\left(-\frac{b}{a}R^1, \frac{1}{a}R^2, -\frac{b}{a}Y\right)$ or $\left(\frac{1}{a}R^2, -\frac{b}{a}R^1, 1+\frac{b}{a}Y\right)$ is distributed as in \eqref{model-B}.
\item If $a<0$ and $b>0$, then $(R^1, R^2)$ is $T^{h,Y}$-invariant if and only if $\left( -\frac{b}{a}R^1,-\frac{1}{a} R^2, -\frac{b}{a} Y \right)$ is distributed as in \eqref{model-IB}.
\item If $a,b>0$, then $(R^1, R^2)$ is $T^{h,Y}$-invariant if and only if $\left(\frac{1}{a}R^2, \frac{b}{a}R^1, 1+ \frac{b}{a}Y\right)$ is distributed as in \eqref{model-IB}.
\end{enumerate}
\end{theorem}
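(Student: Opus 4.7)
The strategy breaks into two directions. The ``if'' direction follows from the cited $T^{h,Y}$-invariance of each of the four basic beta-gamma models combined with a scaling/reflection calculation: in each of the five cases, the prescribed linear change of variables intertwines $T^{h,Y}$-invariance of $(R^1,R^2)$ with canonical $T^{\t h, \t Y}$-invariance of the corresponding basic model, where $\t h$ is one of $y$, $1$, $1-y$, $y-1$. For instance in case (a), if $(R^1, R^2/b, Y)$ has the log-gamma law \eqref{model-IG}, then setting $\t R^2 = R^2/b$ and applying Lemma 3.2 of \cite{S2012} gives $(R^1,\t R^2)\stackrel{d}{=}(Y+Y R^1/\t R^2,\,Y\t R^2/R^1+Y)$; multiplying the second coordinate by $b$ recovers exactly $(R^1,R^2)\stackrel{d}{=}T^{h,Y}(R^1,R^2)$ with $h(y)=by$. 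The analogous substitutions handle the other four cases, with the reflection $(r_1,r_2)\leftrightarrow(r_2,r_1)$ paired with $y\mapsto 1+(b/a)y$ producing the second branch in cases (c) and (e).

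For the ``only if'' direction, the same scalings and reflections reduce each item to a single canonical uniqueness statement: \emph{if $(R^1,R^2,Y)$ are non-degenerate, positive, independent, and $(R^1,R^2)$ is $T^{\t h, Y}$-invariant for one of $\t h \in \{y,\,1,\,1-y,\,y-1\}$, then $(R^1,R^2,Y)$ is distributed as the corresponding basic beta-gamma model.} My plan for each canonical statement is first to upgrade the laws to smooth positive densities on the open supports $O_1,O_2,O_3$, bringing us into the regularity framework of Theorem \ref{thm-h-linear}, and then to convert $T^{\t h, Y}$-invariance into the density identity $f_1(r_1)f_2(r_2) = \int f_1(u) f_2(v) f_3(y)\, |J(u,v,y)|^{-1}\, dy$, where $(u,v,y)$ range over preimages of $(r_1,r_2)$ under $T^{\t h, y}$ and $J$ is the relevant Jacobian. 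Differentiating this identity in $r_1$ and $r_2$, and exploiting the explicit linearity of $\t h$, produces a coupled system of ODEs for $(\log f_i)'$ whose solutions are exactly the power-law-times-exponential densities of gamma and beta distributions; matching the $(R^2,Y)$ marginals back through $T^{\t h, y}$ then pins down the three parameters $\lambda,\mu,\beta$.

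The main obstacle is the regularity upgrade, since Theorem \ref{thm-classify-linear} does not a priori assume that $R^1, R^2, Y$ are absolutely continuous. The route I would take is to exploit that the invariance equation presents the law of $(R^1,R^2)$ as a pushforward of the product law of $(R^1,R^2,Y)$ through the smooth surjection $T^{h,y}$; iterating and using the independence of the three inputs together with a convolution-like smoothing effect in the $y$-variable should force existence of $C^\infty$ densities on the open sets $O_1,O_2,O_3$. A secondary subtlety is case (c), where $h(y)=a+by$ with $b<0$ could change sign over $O_3$: the side condition $-b \notin \{y/x : (x,y)\in O_1\times O_2\}$ is exactly what guarantees $r_2/r_1 + h'(y) = r_2/r_1 + b \neq 0$ on $\Os$, so the hypothesis of Theorem \ref{thm-h-linear} is met and, moreover, the two reflection branches in the statement correspond to the two connected components of the parameter regime separated by this condition.
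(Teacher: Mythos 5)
Your ``if'' direction and your reduction-by-scaling-and-reflection to the four canonical choices of $\t h$ match the paper's strategy (this is exactly Lemma \ref{lem-scaling} combined with the known invariance of the four basic models), and your observation about the role of the side condition in case (c) is essentially correct. The problem is the core of the ``only if'' direction. Theorem \ref{thm-classify-linear} assumes only that $R^1, R^2, Y$ are non-degenerate and independent --- no absolute continuity whatsoever --- and your entire identification argument runs through densities: you propose to ``upgrade'' to smooth densities and then extract ODEs for $(\log f_i)'$. The upgrade step is the crux of the difficulty and you have not actually supplied an argument for it; ``a convolution-like smoothing effect in the $y$-variable should force existence of $C^\infty$ densities'' does not work as stated, because a priori $Y$ itself could be singular (its absolute continuity is part of the conclusion, not the hypothesis), so the pushforward through $T^{h,y}$ has no smoothing to offer. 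Even granting densities, you would still have to carry out a separate ODE analysis in each of the four canonical cases and rule out non-beta/gamma solutions, which is substantial unwritten work.

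The paper avoids all of this with a different key idea that your proposal is missing: Lemma \ref{lemma distr. TFAE} recasts $T^{h,Y}$-invariance as the purely distributional statement that $R^2/R^1$ is \emph{independent} of $\t Y := T^h_3(R^1,R^2)$ together with $\t Y \stackrel{d}{=} Y$. In each canonical case this independence is, after an explicit change of variables, precisely the hypothesis of a classical independence characterization of the gamma and beta laws --- Lukacs' theorem (Theorem \ref{thm-lukacs}) for the gamma and inverse-gamma cases, and the Seshadri--Weso{\l}owski theorem (Theorem \ref{thm-SeWe}) for the beta and inverse-beta cases --- which require only non-degeneracy and independence, never densities. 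This is what Proposition \ref{prop fundamental 1} does, with Lemma \ref{lemma-involution-properties} pinning down the admissible supports (and, in case (c), producing the two branches via the two domains $D^{\pm}_{1,-1}$). To repair your proof you would either need to import these characterization theorems in place of your density/ODE step, or genuinely prove the regularity upgrade, which is a nontrivial open step in your outline.
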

Figure \ref{fig-fundamental modifications} illustrates which one of the four basic beta-gamma models corresponds to each choice of parameters $a,b$.

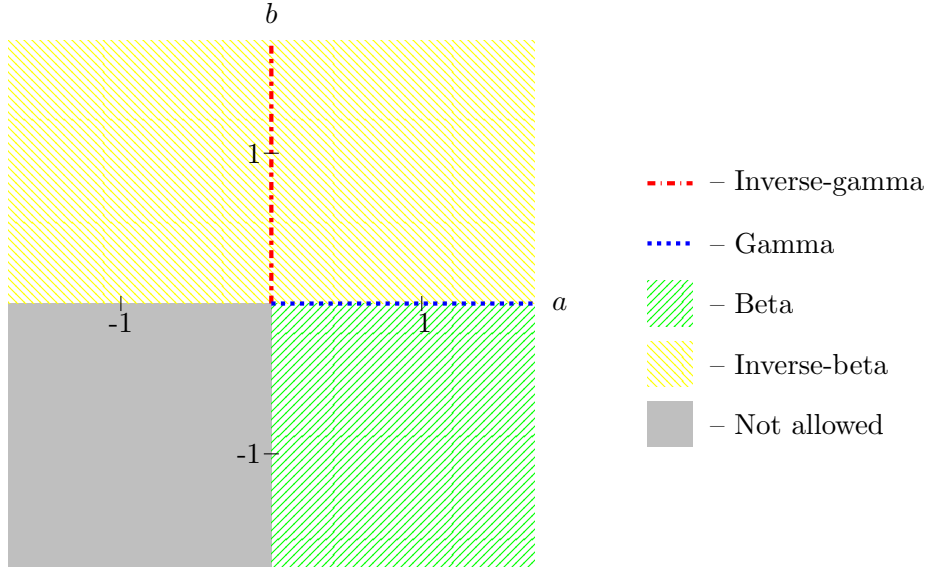
\begin{figure}[ht]
  
  \centering
  \begin{tikzpicture}[scale=1]
	  \path [fill=lightgray] (-3.5,-3.5) rectangle (0,0);
	  \path [pattern=north west lines, pattern color = yellow] (-3.5,0) rectangle (3.5,3.5);
	  \path [pattern=north east lines, pattern color = green] (0,-3.5) rectangle (3.5, 0);

	  \draw [ultra thick, dash dot, red] (0,0) -- (0,3.5);
	  \draw [ultra thick, dotted, blue] (0,0) -- (3.5,0);

	  \node [right] at (3.6,0) {$a$};
	  \node [above] at (0,3.6) {$b$};
	  
	  \node [left] at (0,2) {1};
	  \node [below] at (-2,0) {-1};
	  \node [left] at (0,-2) {-1};
	  \node [below] at (2.04,0) {1};



	  

	  \draw (-2,.1) -- (-2, -.1);
	  \draw (-.1, -2) -- (.1,-2);
      
      \draw (2,.1) -- (2, -.1);
      \draw (-.1, 2) -- (.1,2);
    
      \begin{scope}[shift = {(5,0)}]
      \draw [red, dash dot, ultra thick] (0,1.6) -- (.6,1.6);
      \node [right] at (.7,1.6) {-- Inverse-gamma};
      
      \draw [blue, dotted, ultra thick] (0,.8) -- (.6,.8);
      \node [right] at (.7,.8) {-- Gamma};
      
      \path [green, pattern = north east lines,pattern color = green] (0,-.3) rectangle (.6,.3);
      \node [right] at (.7,0) {-- Beta};
      
      \path [yellow, pattern = north west lines, pattern color = yellow] (0,-1.1) rectangle (.6,-.5);
      \node [right] at (.7,-.8) {-- Inverse-beta};
      
      \path [fill=lightgray] (0,-1.9) rectangle (.6,-1.3);
      \node [right] at (.7,-1.6) {-- Not allowed};
      \end{scope}

  \end{tikzpicture}
  \caption{Modifications of the four beta-gamma models.}
  \label{fig-fundamental modifications}
\end{figure}

In the related work \cite{TL2015}, Thiery and Le Doussal study the implications of Bethe ansatz solvability in the context of $1+1$-dimensional lattice directed polymers. In their work, they consider the model without boundary and do not impose the additional assumption that the weights on incoming horizontal and vertical edges, $u_x$ and $v_x$, have a functional dependence. Making the assumption of coordinate Bethe ansatz solvability, that is, diagonalizability of the time evolution operator of the $n$-point correlation functions, they arrive at a formula for the joint moments of $u_x$ and $v_x$. Under some additional assumptions, they conclude that the four basic beta-gamma models are the only Bethe ansatz solvable ones.

The fact that the Bethe ansatz solvable lattice polymer models (with some additional assumptions) coincide with the stationary models (with sufficient regularity) suggests a possible connection between the two integrability properties. In the current paper we do not further explore this connection, but consider it an interesting direction for future research.

\emph{Structure of the paper:} In Section \ref{section equivalences} we define the stronger property $T^h$-invariance, and give conditions for when $T^{h,Y}$-invariance is equivalent to $T^h$-invariance. $T^h$-invariance will be used as a tool in proving our main theorems. The proof of Theorem \ref{thm-h-linear} is then given in Section \ref{sec-proof of second thm}. In Section \ref{sec-modifications} we describe the natural modifications of reflection and scaling. The proof of Theorem \ref{thm-classify-linear} is given in Section \ref{sec-proof of first thm}.  

\emph{Acknowledgements:} This work is part of our dissertation research at the University of Wisconsin-Madison.  We thank our advisors Timo Sepp{\"a}l{\"a}inen and Benedek Valk\'o for their guidance and insights.

\section{Equivalences between \texorpdfstring{$T^{h,Y}$}{Thy}-invariance and \texorpdfstring{$T^h$}{Th}-invariance}
\label{section equivalences}
First define
\begin{align}\label{eq-T1T2}
 T^h_1(r_1,r_2,y) &:= y+h(y)\frac{r_1}{r_2} & T^h_2(r_1,r_2,y) &:= y\frac{r_2}{r_1}+h(y).
\end{align}
Notice that $(R^1, R^2)$ is $T^{h,Y}$-invariant if and only if 
\[
(T^h_1, T^h_2)(R^1,R^2,Y):= \big(T^h_1(R^1,R^2,Y), T^h_2(R^1,R^2,Y)\big)\stackrel{d}{=}(R^1,R^2).
\]
In this section we determine conditions which allow us to construct a function $T_3^h$ such that $(T^h_1, T^h_2,T_3^h)(R^1,R^2,Y) \stackrel{d}{=}(R^1,R^2,Y)$. Moreover, $T_3^h$ will be such that $T:=(T^h_1,T^h_2,T_3^h)$ is an involution. Recall that a function $T$ is an involution if $T\circ T$ is the identity function. 
\begin{definition}\label{def-polymer involution}
Let $O\subset (0,\infty)^2$, $O_3\subset (0,\infty)$, and $h:O_3\rightarrow (0,\infty)$. We say that an involution $T:O\times O_3 \to O\times O_3$ is a {\rm polymer involution adapted to $h$} if its first two coordinates are as in \eqref{eq-T1T2}. 
\end{definition}

Existence and uniqueness of polymer involutions is addressed in Lemma \ref{lemma TFAE}.  When the polymer involution adapted to $h$ is unique we write $T^h$. In our two main theorems we assume that $R^1$ and $R^2$ are independent and therefore take $O = O_1 \times O_2$. We allow for arbitrary $O\subset (0,\infty)^2$ since the results in this section allow for dependence between $R^1$ and $R^2$.

\begin{definition}\label{T-Invar} Suppose $(R^1,R^2,Y)$ is a random vector taking values in $O\times O_3$, where $O\subset(0,\infty)^2$, $O_3\subset(0,\infty)$, and $Y$ is independent of $(R^1,R^2)$. Let $h:O_3\to (0,\infty)$. If there exists a polymer involution $T$ on $O\times O_3$ adapted to $h$ such that $T(R^1,R^2,Y)\stackrel{d}{=} (R^1,R^2,Y)$, then we say $(R^1,R^2,Y)$ is {\rm $T$-invariant (with respect to $h$)}.
\end{definition}

If $(R^1,R^2,Y)$ is $T$-invariant, the polymer model with weight distributions $(R^1,R^2,Y)$ not only has property \eqref{equation stationarity}, but possesses a stronger form of stationarity called the Burke property (see Theorem 3.3 of \cite{S2012}).  In Definition \ref{def-polymer involution} we restrict our attention to involutions, as $T$-invariance not only implies stationarity, but also a form of reversibility: the construction of a dual measure (see Section 3.2 of \cite{S2012} for more details).

The four basic beta-gamma models are not only $T^{h,Y}$-invariant, but are in fact $T^h$-invariant as well.  The rest of this section is dedicated to relating the properties of $T^{h,Y}$-invariance and $T^h$-invariance, as given in the following proposition.

\begin{proposition}\label{prop-invar-tinvar-eq}
Let $O\subset (0,\infty)^2$, $O_3 \subset (0,\infty)$, and $h:O_3 \to (0,\infty)$. Assume $(R^1,R^2,Y)$ is a random vector taking values in $O\times O_3$ and that $Y$ is independent of $(R^1,R^2)$. Then the following two conditions are equivalent.
\begin{enumerate}
\item The mapping $O\times O_3 \owns (r_1,r_2,y) \mapsto T^{h,y}(r_1,r_2)$ surjects onto $O$, for every $(r_1, r_2)\in O$ the function $O_3\owns y \mapsto y\frac{r_2}{r_1} + h(y)$ is injective, and $(R^1,R^2)$ is $T^{h,Y}$-invariant.
\item There exists a unique polymer involution $T^h$ adapted to $h$ on $O\times O_3$ and $(R^1,R^2,Y)$ is $T^h$-invariant.
\end{enumerate}
\end{proposition}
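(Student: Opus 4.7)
The plan is to exploit the algebraic rigidity that the involution property imposes on any third-coordinate map adapted to $h$, and then to trade the independence of $Y$ from $(R^1,R^2)$ for an exchangeability statement via a well-chosen change of coordinates.

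First I observe the ratio identity
\[
\frac{T^h_1(r_1,r_2,y)}{r_1} \;=\; \frac{T^h_2(r_1,r_2,y)}{r_2} \;=\; \frac{y}{r_1} + \frac{h(y)}{r_2},
\]
so every $T^{h,y}$ preserves the ratio $r_1/r_2$. Writing $T^h(r_1,r_2,y) = (r_1',r_2',y')$ and imposing $T^h \circ T^h = \mathrm{id}$, the two equations $T^h_1(r_1',r_2',y') = r_1$ and $T^h_2(r_1',r_2',y') = r_2$ both collapse, via this identity and $r_1'/r_2' = r_1/r_2$, into the single equation $y' \tfrac{r_2}{r_1} + h(y') = r_2$. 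The injectivity hypothesis of (1) makes this uniquely solvable in $O_3$, and existence follows from the surjectivity hypothesis: any preimage $(r_1'',r_2'',y'')$ of $(r_1,r_2)$ under $T^{h,\cdot}$ has $y''$ solving the equation. Write $\phi(r_1,r_2)$ for this solution; then $T^h_3 = \phi(r_1,r_2)$ is forced, and the third involution equation $T^h_3(r_1',r_2',y') = y$ holds automatically since $y$ itself solves the analogous equation at $(r_1',r_2')$ (as $r_2' = T^h_2(r_1,r_2,y)$). This constructs the unique polymer involution required in (2).

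The heart of (1) $\Rightarrow$ (2) is a change of coordinates. Set $U = R^1/R^2$, $S = R^2$, and $V = T^h_2(R^1,R^2,Y)$, and write $\tau_U(y) := y/U + h(y)$, so $V = \tau_U(Y)$. By the analysis above, $T^h$ acts in $(U,S,V)$-coordinates as the transposition $(U,S,V) \mapsto (U,V,S)$: the ratio $U$ is preserved, the new $S$ equals the old $\tau_U(Y) = V$, and the new $V$ equals $\tau_U(\phi) = \tau_U(\tau_U^{-1}(S)) = S$. Independence of $Y$ from $(R^1,R^2)$ is equivalent to $Y \perp (U,S)$, so conditionally on $U = u$ the variable $V = \tau_u(Y)$ is independent of $S$. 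On the other hand, $T^{h,Y}$-invariance $(US,S) \stackrel{d}{=} (UV,V)$ becomes, after canceling the bijection $(u,s) \mapsto (us,s)$, simply $(U,S) \stackrel{d}{=} (U,V)$, i.e.\ $S \mid U \stackrel{d}{=} V \mid U$. Conditional independence combined with equal conditional marginals yields exchangeability of $(S,V)$ given $U$, and so $(U,S,V) \stackrel{d}{=} (U,V,S)$, which is $T^h$-invariance.

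For (2) $\Rightarrow$ (1), marginalizing $T^h(R^1,R^2,Y) \stackrel{d}{=} (R^1,R^2,Y)$ onto the first two coordinates yields $T^{h,Y}$-invariance. Surjectivity follows from bijectivity of $T^h$: for any $(r_1,r_2) \in O$ and any fixed $y_0 \in O_3$, the point $T^h(r_1,r_2,y_0) \in O \times O_3$ is a preimage of $(r_1,r_2)$ under $T^{h,\cdot}$. Injectivity is enforced by uniqueness of the involution: non-injectivity of $y \mapsto y r_2/r_1 + h(y)$ at some $(r_1,r_2)$ would permit an ambiguous choice of $T^h_3$ (since the fiber of the involution's defining equation would contain multiple solutions), producing distinct polymer involutions and contradicting the uniqueness hypothesis. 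The main obstacle is spotting the $(U,S,V)$ change of coordinates in which $T^h$ is a mere transposition; once this is in hand, the rest is routine manipulation of conditional distributions.
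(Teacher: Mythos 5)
Your argument is, in substance, the paper's own: your $(U,S,V)=(R^1/R^2,\,R^2,\,T^h_2(R^1,R^2,Y))$ coordinates, in which $T^h$ becomes the transposition of the last two entries, are exactly the paper's conjugation $T^h=(G\otimes \mathrm{id})\circ\psi_{2,3}\circ(G\otimes \mathrm{id})^{-1}$ with $G(s,y)=(y+h(y)/s,\,ys+h(y))$, and your construction of $T^h_3$ as the unique solution of $y'\tfrac{r_2}{r_1}+h(y')=r_2$ is the paper's $\pi_2\circ G^{-1}$. The forward direction (existence/uniqueness of the involution, and the passage from $T^{h,Y}$-invariance plus independence to exchangeability of $(S,V)$ given $U$) is correct and matches the paper's Lemmas on equivalent formulations; the paper phrases the distributional step as mutual independence of the triple $(R^2/R^1,\,Y,\,\widetilde{Y})$ rather than via conditional laws, but these are the same computation.

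The one place where your argument has a real gap is the deduction of the injectivity condition from uniqueness in $(2)\Rightarrow(1)$. You assert that non-injectivity of $y\mapsto y\tfrac{r_2}{r_1}+h(y)$ ``would permit an ambiguous choice of $T^h_3$ \ldots producing distinct polymer involutions,'' but having several candidate values in the fiber does not by itself yield a second involution: the involution equation couples the value of $T^h_3$ at $(r_1,r_2,y)$ to its value at the image point $T^h(r_1,r_2,y)$, and a pointwise reassignment within a fiber generically destroys $T\circ T=\mathrm{id}$ (one checks directly that swapping $T^h_3(r_1,r_2,y_1)$ and $T^h_3(r_1,r_2,y_2)$ at a single base point fails to be an involution unless the corresponding image points are swapped as well). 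The claim is nevertheless true and can be repaired: if $\tau_s(y_1)=\tau_s(y_2)=r_2^*$ with $y_1\neq y_2$, then injectivity of the given involution $T$ forces $T$ to permute the finite set of points of the form $(r_2^*/s,\,r_2^*,\,z)$ with $z\in\tau_s^{-1}(r_2^*)$, a set with at least two elements that no outside point maps into; replacing $T$ on this set by a different involution of it (e.g.\ exchanging a $2$-cycle for two fixed points) yields a second polymer involution adapted to $h$, contradicting uniqueness. The paper avoids this issue entirely by first establishing that any polymer involution under the stated hypotheses has a third coordinate with no $y$-dependence (it is $\pi_2\circ G^{-1}$), after which injectivity of $y\mapsto T^h_2(r_1,r_2,y)$ follows immediately from injectivity of $T^h$ itself.
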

\noindent The proof of Proposition \ref{prop-invar-tinvar-eq} follows from combining Lemmas \ref{lemma TFAE}, \ref{lemma distr. TFAE}, and Remark \ref{remark after TFAE lemma} below.

We use the notation $\pi_j:(0,\infty)^2\rightarrow (0,\infty)$ to denote the projection onto the $j$-th coordinate for $j=1,2$.  Given $O\subset (0,\infty)^2$, $Q(O)$ will denote the set $\{\frac{y}{x}: (x,y)\in O\}$. When $O= O_1\times O_2$ we will write $\frac{O_2}{O_1}$ for $Q(O)$.

When $T$ is a polymer involution adapted to $h$ we will often use the following notation 
\begin{equation}
(\t{r}_1,\t{r}_2,\t{y}):=T(r_1,r_2,y)\label{tildes}.
\end{equation}
More precisely, by equations \eqref{eq-T1T2}
\begin{align*}
\t{r}_1 &:= y+h(y)\frac{r_1}{r_2},&
\t{r}_2 &:= y\frac{r_2}{r_1}+h(y),&
\t{y} &:= T_3^h(r_1,r_2,y).
\end{align*}
Note that these definitions imply that
\begin{equation}
\frac{\t{r}_2}{\t{r}_1}=\frac{r_2}{r_1}.\label{persistence of ratios}
\end{equation}
This equality of ratios will turn out to be quite useful.

The following lemma gives an equivalence to the existence of a unique polymer involution.

\begin{lemma}\label{lemma TFAE}
Let $O\subset (0,\infty)^2$, $O_3\subset (0,\infty)$, $h:O_3\rightarrow (0,\infty)$, and  $T_1^h, T_2^h$ be as in \eqref{eq-T1T2}. Then the following are equivalent:

\begin{enumerate}
\item $(T_1^h,\,T_2^h)(O\times O_3)=O$ and for every  $(r_1,r_2)\in O$ the function $O_3\owns y\mapsto T_2^h(r_1,r_2,y)=y\frac{r_2}{r_1}+h(y)$ is injective.
\item $G(s,y):=\bigl(y+\frac{h(y)}{s},\,ys+h(y)\bigr)$ is a bijection between $Q(O)\times O_3$ and $O$.
\item There exists a unique polymer involution $T^h$ on $O\times O_3$ adapted to $h$.  Moreover, 
\begin{equation}
T^h=(G\otimes \text{id})\circ\psi_{2,3}\circ (G\otimes \text{id})^{-1}, \label{T=G tensor id...}
\end{equation}
where $\psi_{2,3}(a,b,c)=(a,c,b)$ and $(G\otimes \text{id})(a,b,c):=(G(a,b),c)$.
\item There exists a polymer involution on $O\times O_3$ adapted to $h$ such that $T_3^h$ has no $y$-dependence.
\end{enumerate}

\end{lemma}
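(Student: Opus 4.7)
The proof plan is to set up a cycle $(a)\Rightarrow(b)\Rightarrow(c)\Rightarrow(d)\Rightarrow(a)$, built around the central observation that $(T_1^h,T_2^h)(r_1,r_2,y)=G(r_2/r_1,\,y)$ and that by a direct check $G(s,y)$ always has second coordinate over first coordinate equal to $s$ (i.e., the ratio $r_2/r_1$ is automatically recorded in the $s$-slot). This will let me transfer surjectivity/injectivity statements freely between the $(T_1^h,T_2^h)$ picture and the $G$ picture, and it is essentially the non-random counterpart of the persistence of ratios \eqref{persistence of ratios}.

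For $(a)\Leftrightarrow(b)$ I will argue: surjectivity of $(T_1^h,T_2^h)$ onto $O$ is the same statement as surjectivity of $G$ onto $O$ because the image of $(T_1^h,T_2^h)$ on $O\times O_3$ equals the image of $G$ on $Q(O)\times O_3$. For injectivity of $G$, if $G(s_1,y_1)=G(s_2,y_2)$, taking the ratio of second to first coordinate forces $s_1=s_2$, after which injectivity in $y$ is exactly the hypothesis that $y\mapsto y(r_2/r_1)+h(y)$ is injective for each $(r_1,r_2)\in O$.

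For $(b)\Rightarrow(c)$ I will simply define $T^h$ by the formula \eqref{T=G tensor id...}; since $\psi_{2,3}$ is an involution and $G\otimes\text{id}$ is a bijection by (b), $T^h$ is an involution, and tracing $(r_1,r_2,y)$ through the composition gives first two coordinates $(T_1^h,T_2^h)(r_1,r_2,y)$, so $T^h$ is a polymer involution adapted to $h$. Uniqueness is the key subtle step: given any polymer involution $T$ with $T(r_1,r_2,y)=(\t r_1,\t r_2,\t y)$, applying $T$ a second time and using \eqref{persistence of ratios} yields
\[
r_1=\t y+h(\t y)\frac{\t r_1}{\t r_2}=\t y+h(\t y)\frac{r_1}{r_2},
\]
which rearranges to $r_2=\t y\,(r_2/r_1)+h(\t y)$; then injectivity (from (b), via (a)) of $y\mapsto y(r_2/r_1)+h(y)$ pins down $\t y$ uniquely. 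Then $(c)\Rightarrow(d)$ is read off the formula: the third coordinate of $T^h(r_1,r_2,y)$ is the second entry of $(G\otimes\text{id})^{-1}(r_1,r_2,y)$, i.e., the unique $y'$ with $G(r_2/r_1,y')=(r_1,r_2)$, which clearly depends only on $(r_1,r_2)$.

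Finally for $(d)\Rightarrow(a)$ I will let $T$ be the given polymer involution with $T_3^h(r_1,r_2,y)=\phi(r_1,r_2)$. The involution identity $T\circ T=\text{id}$, expanded on the first two coordinates and again invoking \eqref{persistence of ratios} with $s=r_2/r_1$, gives $G(s,\phi(r_1,r_2))=(r_1,r_2)$ for every $(r_1,r_2)\in O$, which is the required surjectivity. On the third coordinate the involution gives $\phi(\t r_1,\t r_2)=y$ where $(\t r_1,\t r_2)=G(s,y)$; thus $\phi(\cdot,\cdot)$ is a left inverse of $y\mapsto G(s,y)$ on the fiber $\{r_2/r_1=s\}$, so $y\mapsto G(s,y)$ is injective, and hence so is $y\mapsto y\,(r_2/r_1)+h(y)$. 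The main obstacle I expect is the uniqueness step in $(b)\Rightarrow(c)$: one must carefully unwind the involution condition and keep track of which variables the recovered $\t y$-equation is linear/injective in, where it would be easy to conflate $(r_1,r_2)$ with $(\t r_1,\t r_2)$ if not for the persistence of ratios identity.
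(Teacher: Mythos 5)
Your proposal is correct and follows essentially the same route as the paper: the same cycle $(a)\Rightarrow(b)\Rightarrow(c)\Rightarrow(d)\Rightarrow(a)$, the same identification $(T_1^h,T_2^h)(r_1,r_2,y)=G(\frac{r_2}{r_1},y)$ together with the persistence of ratios, and the same uniqueness argument pinning down $\t{y}$ from the involution identity (your scalar equation $r_2=\t{y}\frac{r_2}{r_1}+h(\t{y})$ is just the second coordinate of the paper's relation $G^{-1}(r_1,r_2)=(\frac{r_2}{r_1},\t{y})$). Only note that in the uniqueness step the needed injectivity of $y\mapsto y\frac{r_2}{r_1}+h(y)$ should be cited as an immediate consequence of the injectivity of $G$ in $(b)$ rather than ``via $(a)$,'' to avoid any appearance of circularity.
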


\begin{proof}
$(a)\Rightarrow (b)$:  Note that
\begin{equation}\label{equation-G-T1T2}
G(\frac{r_2}{r_1},y)=(T_1^h,T_2^h)(r_1,r_2,y)
\end{equation} implies $G(Q(O)\times O_3)=O$.  Injectivity of $G$ follows from  $\frac{\pi_2\circ G(s,y)}{\pi_1\circ G(s,y)}=s$ and the injectivity condition on $T_2^h$.

$(b)\Rightarrow (c)$:  We first show uniqueness.  Suppose $T=(T_1^h,T_2^h,T_3^h)$ is a polymer involution on $O\times O_3$ adapted to $h$.  For fixed $(r_1,r_2,y)\in O\times O_3$, with notation as in \eqref{tildes},  we have $T(\t{r}_1,\t{r}_2,\t{y})=(r_1,r_2,y)$ since $T$ is an involution.   Using \eqref{persistence of ratios} we have
\[
(r_1,r_2)=(T_1^h,T_2^h)(\t{r}_1,\t{r}_2,\t{y})=G(\frac{r_2}{r_1},\t{y}).
\]
Therefore 
\begin{equation}
G^{-1}(r_1,r_2)=(\frac{r_2}{r_1},\,T_3^h(r_1,r_2,y)).\label{equation - first g inverse}
\end{equation}
Since $G^{-1}$ has no $y$-dependence, neither does $T_3^h$.  One can now check that
\begin{equation}
T=(G\otimes \text{id}) \circ \psi_{2,3} \circ (G\otimes \text{id})^{-1} \label{T3-form}
\end{equation}
proving uniqueness.  Existence follows by simply setting 
$T_3^h(r_1,r_2,y)=\pi_2\circ G^{-1}(r_1,r_2)$. This forces equality \eqref{T3-form}, the right side of which is indeed a polymer involution adapted to $h$. 

$(c)\Rightarrow (d)$ is clear.

$(d)\Rightarrow (a)$:  Let $T$ be a polymer involution on $O\times O_3$ adapted to $h$ for which $T_3^h$ has no $y$-dependence.  Clearly the first two components of $T$, $(T_1^h,T_2^h)$, surject onto $O$.   Now fix $(r_1,r_2)\in O$.   Since $T_1^h(r_1,r_2,y)=\frac{r_1}{r_2}T_2^h(r_1,r_2,y)$ and $T$ is itself injective, we have injectivity of $y\mapsto T_2^h(r_1,r_2,y)$.  
\end{proof}

\begin{remark}\label{remark after TFAE lemma}
Note that the conditions in part (a) of Lemma \ref{lemma TFAE} depend only on the sets $O,\, O_3$, and the function $h$. The condition $(T^h_1,T^h_2)(O\times O_3)=O$ in part (a) is equivalent to the condition that the mapping $O\times O_3\owns (r_1,r_2,y)\mapsto T^{h,y}(r_1,r_2)$ surjects onto $O$ (recall definition \eqref{equation - Thy}). 
\end{remark}

When the polymer involution $T$ is such that $T_3^h$ has no $y$-dependence, we will simply write $T_3^h(r_1,r_2)$.
The following lemma gives conditions for when $T^{h,Y}$-invariance is equivalent to $T^h$-invariance.
\begin{lemma}\label{lemma distr. TFAE}
Suppose $O,\, O_3$, and $h$ satisfy one of the equivalent conditions in Lemma \ref{lemma TFAE}.  Let $(R^1,R^2,Y)$ be a random vector taking values in $O\times O_3$ and assume that $Y$ is independent of the pair $(R^1,R^2)$. Let $T^h$ be the unique polymer involution adapted to $h$, defined by \eqref{T=G tensor id...}, and write $\t{Y}=T_3^h(R^1,R^2)$.  Then the following are equivalent:

\begin{enumerate}
\item $(R^1,R^2)$ is $T^{h,Y}$-invariant.
\item $R^2/R^1$ is independent of $\t{Y}$ and $\t{Y} \overset{d}{=} Y$.
\item $(R^1,R^2,Y)$ is $T^h$-invariant.
\end{enumerate}
\end{lemma}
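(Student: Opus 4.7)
The plan is to push everything through the bijection $G$ from Lemma \ref{lemma TFAE} and exploit the preservation of the ratio $r_2/r_1$ recorded in \eqref{persistence of ratios}. Write $S := R^2/R^1$. By \eqref{equation-G-T1T2} and \eqref{equation - first g inverse}, $(T_1^h, T_2^h)(R^1, R^2, Y) = G(S, Y)$ and $G^{-1}(R^1, R^2) = (S, \tilde{Y})$, since $T_3^h$ has no $y$-dependence under the hypothesis of Lemma \ref{lemma TFAE}.

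For $(a) \Leftrightarrow (b)$, I would rewrite $T^{h,Y}$-invariance as $G(S, Y) \stackrel{d}{=} (R^1, R^2)$ and apply the bijection $G^{-1}$ to obtain the equivalent statement $(S, Y) \stackrel{d}{=} (S, \tilde{Y})$. Since $Y$ is independent of $(R^1, R^2)$, in particular $Y \perp S$, so the left-hand side factorizes as the product of the marginals of $S$ and $Y$. Matching the two joint laws then forces both $\tilde{Y} \stackrel{d}{=} Y$ and $\tilde{Y} \perp S$, i.e., exactly (b); conversely, (b) gives back $(S, Y) \stackrel{d}{=} (S, \tilde{Y})$ since both sides factor into the same product measure.

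The implication $(c) \Rightarrow (a)$ is immediate by projecting the equality of laws onto the first two coordinates, so only $(b) \Rightarrow (c)$ remains. Here the plan is to use formula \eqref{T=G tensor id...} to compute
\[
T^h(R^1, R^2, Y) = (G \otimes \text{id}) \circ \psi_{2,3}(S, \tilde{Y}, Y) = \bigl(G(S, Y),\, \tilde{Y}\bigr),
\]
and then apply the bijection $(G^{-1} \otimes \text{id})$ to reduce the desired invariance $T^h(R^1, R^2, Y) \stackrel{d}{=} (R^1, R^2, Y)$ to the equality
\[
(S, Y, \tilde{Y}) \stackrel{d}{=} (S, \tilde{Y}, Y).
\]
Under (b) the three random variables $S$, $Y$, $\tilde{Y}$ are mutually independent ($Y$ is independent of $(R^1, R^2)$, hence of the pair $(S, \tilde{Y})$, while $S \perp \tilde{Y}$ is part of (b)), and $Y$ and $\tilde{Y}$ share a marginal law; swapping two i.i.d.\ coordinates in a product distribution leaves it invariant, giving the required equality.

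I do not expect any serious obstacle; the main subtlety is bookkeeping, namely recognizing from \eqref{persistence of ratios} that $G^{-1}(R^1, R^2) = (S, \tilde{Y})$ so that the $T^h$-invariance and $T^{h,Y}$-invariance can both be transported to statements about $(S, Y, \tilde{Y})$ via the bijection $G$. Once this identification is made, the three conditions collapse onto each other by elementary manipulations of independence and product measures.
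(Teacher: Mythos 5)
Your proposal is correct and follows essentially the same route as the paper: both transport the invariance statements through the bijection $G$ using \eqref{persistence of ratios} and \eqref{equation - first g inverse}, derive $(a)\Leftrightarrow(b)$ by matching the product law of $(R^2/R^1,Y)$ against that of $(R^2/R^1,\t{Y})$, and obtain $(b)\Rightarrow(c)$ from the mutual independence of the triple $(R^2/R^1,Y,\t{Y})$ together with $\t{Y}\overset{d}{=}Y$. Your phrasing of the last step as invariance of a product measure under swapping two i.i.d.\ coordinates is just a cosmetic repackaging of the paper's argument.
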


\begin{proof}
$(a) \Leftrightarrow (b)$:   Put $(\t{R}^1,\t{R}^2)=(T_1^h,T_2^h)(R^1,R^2,Y)$. Using equations \eqref{equation-G-T1T2} and \eqref{equation - first g inverse}, 
\begin{align*}
&\,\,\,\,\,  G(R^2/R^1,Y)=(\t{R}^1,\t{R}^2)\overset{d}{=}(R^1,R^2)\\
&\Leftrightarrow(R^2/R^1,Y)\overset{d}{=} G^{-1}(R^1,R^2)=(R^2/R^1,\t{Y}) \\
&\Leftrightarrow R^2/R^1 \text{ is independent of } \t{Y}  \text{ and } Y\overset{d}{=} \t{Y}.
\end{align*}
$(c) \Rightarrow (a)$ is clear.  We now show that $(a)$ and $(b)$ imply $(c)$. Since $T_3^h$ has no $y$-dependence, $Y$ is independent of the pair $(R^2/R^1,\t{Y})$.  Therefore the triple $(R^2/R^1,Y,\t{Y})$ is independent.  Thus $(\t{R}^1,\t{R}^2)=G(R^2/R^1,Y)$ is independent of $\t{Y}$. Now combining $(a)$ and $\t{Y}\overset{d}{=} Y$ we get $(\t{R}^1,\t{R}^2,\t{Y})\overset{d}{=} (R^1,R^2,Y)$.
\end{proof}

We now give an analogue of Lemma \ref{lemma TFAE} in which $h$ and $T^h$ are continuously differentiable. Given a differentiable transformation $F:U\rightarrow \R^m$, where $U\subset \R^n$ is open, 
use the notations $DF(u)$ and $D[F](u)$ to denote the Jacobian matrix of $F$ evaluated at the point $u\in U$.   
When $m=n$ we say $F$ is a $C^1$-diffeomorphism if $F$ is injective, continuously differentiable, and its Jacobian matrix is invertible throughout $U$.

\begin{lemma}\label{lemma differentiable TFAE}
Let $O\subset (0,\infty)^2$, $O_3\subset (0,\infty)$, $h:O_3\rightarrow (0,\infty)$, and  $T_1^h, T_2^h$ be as in \eqref{eq-T1T2}. Further assume $O$ and $O_3$ are open, $O_3$ is connected, and $h$ is continuously differentiable.  Then the following are equivalent:

\begin{enumerate}
\item $(T_1^h,T_2^h)(O\times O_3) = O$ and the following function does not vanish on $Q(O)\times O_3$ 
\begin{equation} \label{definition-L} L(s,y) :=s + h'(y).\end{equation} 
\item $G(s,y):=(y+\frac{h(y)}{s},\,ys+h(y))$ is a $C^1$-diffeomorphism between $Q(O)\times O_3$ and $O$.  Moreover its  Jacobian matrix and determinant are given by 
\begin{align}
DG(s,y)&= \begin{bmatrix}
-h(y)/s^2 & L(s,y)/s\\
y & L(s,y)
\end{bmatrix}, & \det DG(s,y)&=-\frac{L(s,y)}{s}\left(y+\frac{h(y)}{s}\right). \label{equation Matrix DG} 
\end{align}

\item There exists a unique $C^1$-diffeomorphic polymer involution $T^h$ on $O\times O_3$ adapted to $h$.  Moreover $T_3^h$ has no $y$ dependence and the Jacobian matrix and determinant of $T^h$ are given by
\begin{equation}\label{equation-DT}
D T^h(r_1,r_2,y) = \frac{1}{r_1}\begin{bmatrix}
h(y)/s & -h(y)/s^2 & L(s,y)r_1/s \\
-ys & y &  L(s,y)r_1\\
 \t{y}s/L(s,\t{y}) &  h(\t{y})/\big(s L(s,\t{y})\big) & 0
\end{bmatrix},
\end{equation} 

\[\det DT^h(r_1,r_2,y)=-\left(\frac{y}{r_1}+\frac{h(y)}{r_2}\right)\frac{L(s,y)}{L(s,\t{y})}, \]
 where $s=\frac{r_2}{r_1}$ and $\t{y} = T_3^h(r_1,r_2)$.
\item There exists a differentiable polymer involution on $O\times O_3$ adapted to $h$.
\end{enumerate}
\end{lemma}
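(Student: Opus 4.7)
The plan is to prove the four conditions equivalent by running through the cycle $(a)\Rightarrow(b)\Rightarrow(c)\Rightarrow(d)\Rightarrow(a)$, invoking Lemma \ref{lemma TFAE} whenever purely set-theoretic facts are needed.

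For $(a)\Rightarrow(b)$, I would differentiate the coordinates of $G$ directly to obtain the claimed Jacobian matrix, whose determinant factors as $-\tfrac{L(s,y)}{s}\bigl(y+\tfrac{h(y)}{s}\bigr)$ and is therefore nonvanishing on $Q(O)\times O_3$. Injectivity of $G$ is recovered as in the proof of Lemma \ref{lemma TFAE}: the ratio of the components of $G(s,y)$ recovers $s$, and for fixed $s$ the derivative of $y\mapsto ys+h(y)$ is $L(s,y)$, which has constant sign on the connected set $O_3$ by continuity, making the map strictly monotone. Surjectivity of $G$ onto $O$ follows from \eqref{equation-G-T1T2} and the surjectivity hypothesis in $(a)$, and smoothness is inherited from that of $h$.

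For $(b)\Rightarrow(c)$, Lemma \ref{lemma TFAE} furnishes the unique polymer involution together with the factorization \eqref{T=G tensor id...}. Since $G$ is now a $C^1$-diffeomorphism, so are $G\otimes\mathrm{id}$, its inverse, and consequently $T^h$. The first two rows of $DT^h$ come from differentiating $T_1^h,T_2^h$ directly. For the third row I would use $T_3^h=\pi_2\circ G^{-1}$: inverting $DG(s,\widetilde{y})$ and exploiting $G(s,\widetilde{y})=(r_1,r_2)$ to simplify the entries produces the stated row. The determinant of $DT^h$ is cleanest computed from the factorization as
\[
\det DT^h(r_1,r_2,y)=\frac{\det DG(s,y)}{\det DG(s,\widetilde{y})}\cdot\det D\psi_{2,3}=-\frac{L(s,y)\,\widetilde{r}_1}{L(s,\widetilde{y})\,r_1},
\]
and the identity $\widetilde{r}_1/r_1=y/r_1+h(y)/r_2$ finishes the computation.

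The implication $(c)\Rightarrow(d)$ is immediate. For $(d)\Rightarrow(a)$, suppose $T$ is a differentiable polymer involution adapted to $h$. Applying the chain rule to $T\circ T=\mathrm{id}$ shows $DT$ is invertible everywhere, and $(T_1^h,T_2^h)(O\times O_3)=O$ follows from the bijectivity of the involution $T$. For the nonvanishing of $L$, the key observation is that the $2\times 2$ minor of $DT$ coming from the first two rows and first two columns has determinant zero, because in those rows the first column equals $-s$ times the second (with $s=r_2/r_1$). Expanding $\det DT$ along the third column therefore kills the $\partial_y T_3^h$ contribution, and careful factoring produces
\[
\det DT(r_1,r_2,y)=-\frac{L(s,y)\,\widetilde{r}_2}{r_1 s^2}\Bigl(\frac{\partial T_3^h}{\partial r_1}+s\,\frac{\partial T_3^h}{\partial r_2}\Bigr),
\]
so invertibility of $DT$ forces $L(s,y)\neq 0$ throughout $Q(O)\times O_3$. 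The main obstacle is precisely this last direction: one must recognize that the $(r_1,r_2)$-block of $DT$ is automatically singular for any polymer involution, so that all nondegeneracy of $DT$ is carried by $L(s,y)$ together with the gradient of $T_3^h$. Once this is in hand the remaining steps reduce to bookkeeping and the direct Jacobian computations outlined above.
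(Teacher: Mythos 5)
Your proposal is correct and follows essentially the same route as the paper's proof: the same cycle $(a)\Rightarrow(b)\Rightarrow(c)\Rightarrow(d)\Rightarrow(a)$, the same use of Lemma \ref{lemma TFAE} and the conjugation formula \eqref{T=G tensor id...} with the inverse function theorem, and the same key observation for $(d)\Rightarrow(a)$ that the upper-left $2\times 2$ block of $DT$ is singular because its first column is $-s$ times its second. Your explicit factorization of $\det DT$ in that last step is just a more detailed rendering of the paper's argument that $L(s,y)=0$ would force $\det DT=0$.
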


\begin{proof}
$(a)\Rightarrow (b)$: For fixed $(r_1,r_2)\in O$, since $y\mapsto \frac{\p T_2^{h}}{\p y}(r_1,r_2,y)= L(\frac{r_2}{r_1},y)$ does not vanish on the connected set $O_3$, the conditions of Lemma \ref{lemma TFAE}-(a) are satisfied.  Therefore $G$ is a bijection.  The continuous differentiability of $h$ now implies that $G$ is continuously differentiable.  The Jacobian matrix and determinant of $G$ can now be calculated.  Notice that for all $(s,y)\in Q(O)\times O_3$, $y+h(y)/s=\pi_1\circ G(s,y)\in \pi_1(O)\subset (0,\infty)$.  Thus the Jacobian determinant of $G$ does not vanish on $Q(O)\times O_3$, which shows it is a $C^1$-diffeomorphism.

$(b)\Rightarrow (c)$: Since $G$ is a bijection, Lemma \ref{lemma TFAE} gives existence and uniqueness of the polymer involution $T^h=(G\otimes \text{id})\circ \psi_{2,3}\circ (G\otimes \text{id})^{-1}$.  Since $G$ is a $C^1$-diffeomorphism, the inverse function theorem tells us $T^h$ is a $C^1$-diffeomorphism as well. Now fix $(r_1,r_2,y)\in O\times O_3$ and put $(s,\t{y})=\big(\frac{r_2}{r_1}, \ T_3^h(r_1,r_2)\big)$.    By  \eqref{equation - first g inverse}
\begin{equation}
(s,\t{y})=G^{-1}(r_1,r_2).\label{equation- g inverse}
\end{equation}
   $DG^{-1}(r_1,r_2)$ is now the inverse of the matrix $DG(G^{-1}(r_1,r_2))=DG(s,\t{y})$.  \eqref{equation- g inverse} implies $(r_1,r_2)=G(s,\t{y})=\big(\t{y}+h(\t{y})/s,\,\t{y}s+h(\t{y})\big)$.  Using this one can show that
\begin{align}
DG^{-1}(r_1,r_2)&= \frac{1}{r_1}\begin{bmatrix}
-s & 1 \\
 \frac{s\t{y}}{L(s,\t{y})} & \frac{h(\t{y})}{s L(s,\t{y})}
\end{bmatrix} & &\text{ and } & \det DG^{-1}(r_1,r_2)=-\frac{s}{r_1 L(s,\t{y})}.\label{equation DG inverse calc}
\end{align}
Using equations \eqref{equation Matrix DG}, \eqref{equation- g inverse}, and \eqref{equation DG inverse calc} we can compute
\begin{align*}
DT^h(r_1, r_2, y) = & \left[D(G\otimes id)\left(\psi_{2,3} \circ (G^{-1}\otimes id)(r_1, r_2, y)\right)\right] 
\cdot \left[D\psi_{2,3}\left((G^{-1}\otimes id)(r_1, r_2, y)\right)\right] \\
&\cdot \left[D(G^{-1}\otimes id)(r_1, r_2, y)\right]\\
= & \left[DG(s,y)\otimes 1\right]
\cdot\left[ D\psi_{2,3}\right]
\cdot\left[ DG^{-1}(r_1,r_2) \otimes 1\right]\\
= &\begin{bmatrix}
\frac{-h(y)}{s^2} & \frac{L(s,y)}{s} & 0\\
y & L(s,y) & 0\\
0 & 0 & 1
\end{bmatrix}\cdot \begin{bmatrix}
1 & 0 & 0\\
0 & 0 & 1\\
0 & 1 & 0
\end{bmatrix}\cdot \frac{1}{r_1}\begin{bmatrix}
-s & 1 & 0\\
\frac{s \t{y}}{L(s,\t{y})} & \frac{h(\t{y})}{s L(s,\t{y})} & 0\\
0 & 0 & r_1
\end{bmatrix}\\
= &\frac{1}{r_1}\begin{bmatrix}
h(y)/s & -h(y)/s^2 & L(s,y)r_1/s \\
-ys & y &  L(s,y)r_1\\
 \t{y}s/L(s,\t{y}) &  h(\t{y})/\big(s L(s,\t{y})\big) & 0
\end{bmatrix}
\end{align*}
and
\begin{align*}
\det(DT^h(r_1,r_2,y)) &= \det(DG(s,y)) \det(D\psi_{2,3}) \det(DG^{-1}(r_1,r_2))\\
&=-\frac{L(s,y)}{s}\left(y+\frac{h(y)}{s}\right)(-1) \left(-\frac{s}{r_1L(s,\t{y})}\right)\\
&= -\left(\frac{y}{r_1}+\frac{h(y)}{r_2}\right)\frac{L(s,y)}{L(s,\t{y})}.
\end{align*}

$(c)\Rightarrow (d)$ is clear.

$(d)\Rightarrow (a)$:  If $T$ is a differentiable polymer involution adapted to $h$, then its Jacobian matrix has the same entries as the $2\times 3$ upper portion of \eqref{equation-DT}, as $(T_1^h,T_2^h)$ are completely determined.  Therefore the determinant of the top-left $2\times 2$ minor of the Jacobian matrix of $T$ is zero. Thus $L$ vanishing at a point $(s,y)\in Q(O)\times O_3$ would imply the Jacobian determinant of $T$ vanishes at any point $(r_1,r_2,y)\in O\times O_3$ such that $\frac{r_2}{r_1}=s$.  Since $T\circ T$ is the identity function, the Jacobian determinant of $T$ cannot vanish on $O\times O_3$.  Thus $L$ cannot vanish on $Q(O)\times O_3$.
\end{proof}

\section{Proof of Theorem \ref{thm-h-linear}}\label{sec-proof of second thm}

We begin by using Lemma \ref{lemma differentiable TFAE} to give another useful equivalence to $T$-invariance under some regularity assumptions. In the appendix of \cite{T2016}, Thiery uses a specific case of the following proposition to prove the invariance of the inverse-beta model.  It can also be used to prove invariance of the other three basic beta-gamma models.

\begin{proposition}\label{prop-q-condition}
Let $(R^1,R^2,Y)$ be a random vector with density $\rho$ and assume $Y$ is independent of $(R^1,R^2)$.  Suppose the support of $\rho$ equals $O\times O_3$ where $O\subset (0,\infty)^2$ is open and $O_3\subset (0,\infty)$ is open and connected.  Let $h:O_3\rightarrow (0,\infty)$ be continuously differentiable and $T$ be a differentiable polymer involution adapted to $h$ on $O\times O_3$. Then $(R^1,R^2,Y)$ is $T$-invariant if and only if

\[
 q\circ T(x)=q(x) \qquad \text{for a.e.\ } x\in O\times O_3
\]
where $q(r_1,r_2,y):=\frac{r_2}{|L(r_2/r_1,y)|}\rho(r_1,r_2,y)$ and $L(s, y)= s + h'(y)$, as given in \eqref{definition-L}.
\end{proposition}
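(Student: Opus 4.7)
The plan is to recognize that, under the regularity assumptions, $T$-invariance is equivalent to a pointwise a.e.\ identity on the joint density, which then reduces almost immediately to $q\circ T = q$ after substituting the Jacobian determinant supplied by Lemma \ref{lemma differentiable TFAE}. First, since $T$ is an involutive $C^1$-diffeomorphism of $O\times O_3$ onto itself (Lemma \ref{lemma differentiable TFAE}(c)), the change-of-variables formula shows that $T(R^1,R^2,Y)$ has density $(\rho\circ T)\cdot |\det DT|$; here I use $T^{-1}=T$ together with the identity $|\det DT(T(x))|\cdot |\det DT(x)|=1$ obtained by differentiating $T\circ T = \mathrm{id}$. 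Hence $T$-invariance amounts to the pointwise identity
\[
\rho(x) \;=\; \rho(T(x))\,|\det DT(x)| \qquad \text{a.e.\ on } O\times O_3.
\]

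Second, I would substitute the explicit Jacobian determinant from Lemma \ref{lemma differentiable TFAE}(c): with $s=r_2/r_1$ and $\widetilde y = T_3^h(r_1,r_2)$,
\[
|\det DT(r_1,r_2,y)| \;=\; \left(\frac{y}{r_1}+\frac{h(y)}{r_2}\right)\frac{|L(s,y)|}{|L(s,\widetilde y)|}.
\]
A one-line manipulation using $\widetilde r_2 = T_2^h(r_1,r_2,y) = ys + h(y)$ gives the algebraic identity $\tfrac{y}{r_1}+\tfrac{h(y)}{r_2} = \tfrac{\widetilde r_2}{r_2}$; this is essentially the only nontrivial computation required.

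Third, I invoke the persistence of ratios \eqref{persistence of ratios}, namely $\widetilde r_2/\widetilde r_1 = r_2/r_1 = s$, so the denominator of $|\det DT|$ may be rewritten as $|L(\widetilde r_2/\widetilde r_1, \widetilde y)|$. Plugging all of this into the density identity above and multiplying both sides by $r_2/|L(r_2/r_1,y)|$, the $T$-invariance condition turns into exactly
\[
\frac{r_2}{|L(r_2/r_1,y)|}\,\rho(r_1,r_2,y) \;=\; \frac{\widetilde r_2}{|L(\widetilde r_2/\widetilde r_1,\widetilde y)|}\,\rho\bigl(T(r_1,r_2,y)\bigr),
\]
i.e.\ $q(x)=q(T(x))$ a.e., giving both directions simultaneously.

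I do not expect a real obstacle: the whole argument is a direct substitution once Lemma \ref{lemma differentiable TFAE} is in hand. The only small technical point is to check that the pushforward calculation goes through cleanly on $O\times O_3$; this uses that $T$ is a bijection of $O\times O_3$ onto itself, which is built into the definition of a polymer involution on that set, so the a.e.\ statement makes sense on the support of $\rho$ and both $\rho$ and $\rho\circ T$ vanish off that support.
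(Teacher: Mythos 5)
Your proposal is correct and follows essentially the same route as the paper: change of variables via the involution to get $\rho = (\rho\circ T)\,|\det DT|$, substitution of the explicit Jacobian determinant from Lemma \ref{lemma differentiable TFAE}, the identity $\tfrac{y}{r_1}+\tfrac{h(y)}{r_2}=\tfrac{\widetilde r_2}{r_2}$, and the persistence of ratios \eqref{persistence of ratios} to rewrite everything as $q\circ T=q$. The paper's proof is the same computation in the same order, so there is nothing to add.
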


\begin{proof}[Proof of Proposition \ref{prop-q-condition}]
Recall the notation \eqref{tildes}. By Lemma \ref{lemma differentiable TFAE}, $L$ does not vanish on $Q(O)\times O_3$ and $T$ is in fact a $C^1$-diffeomorphism with 
\[
\det DT(r_1,r_2,y)=-\left(\frac{y}{r_1}+\frac{h(y)}{r_2}\right)\frac{L(r_2/r_1, y)}{L(r_2/r_1, \t{y})}=-\frac{\t{r}_2 L(r_2/r_1,y)}{r_2 L(r_2/r_1,\t{y})}.
\]
Therefore $T(R^1,R^2,Y)$ has density 
\begin{align*}
\widehat{\rho}(x)&:=\rho(T^{-1}(x))\left|\det DT^{-1}(x)\right| = \rho(T(x))\left|\det DT(x)\right|
\end{align*}
supported on $x\in O\times O_3$. Thus $T$-invariance of $(R^1,R^2,Y)$ is equivalent to $\rho(x)=\widehat{\rho}(x)$ a.e.\ on $O\times O_3$.

Using \eqref{persistence of ratios} we can explicitly write $\widehat{\rho}(r_1,r_2,y)=\rho(\t{r}_1,\t{r}_2,\t{y})\left|\frac{\t{r}_2 L(r_2/r_1,y)}{r_2 L(\t{r}_2/\t{r}_1,\t{y})}
\right|$. After rearranging terms, the condition $\rho(x)=\widehat{\rho}(x)$ for a.e.\ $x\in O\times O_3$ yields the desired result. 
\end{proof}

We now prove the first main result.

\begin{proof}[Proof of Theorem \ref{thm-h-linear}]
By assumption $\ID$ has density $\rho(r_1,r_2,y)=f_1(r_1)f_2(r_2)f_3(y)$. By Lemma \ref{lemma differentiable TFAE}, there exists a unique differentiable polymer involution $T^h$ on $\Os$ adapted to $h$ and the function $L(s,y)=s+h'(y)$ does not vanish on the set $\frac{O_2}{O_1}\times O_3$. Applying Proposition \ref{prop-q-condition} gives $q\circ T^h = q$ a.e.\ on $\Os$. Since $f_1$, $f_2$, $f_3$, and $T^h$ are continuous, this equality holds everywhere on $\Os$.  Since the support of $f_j$ equals $O_j$, we can further assume $f_j(x) = \exp(\eta_j(x))$ for $x\in O_j$, $j=1,2,3$. Note that $\eta_j$ has the same differentiability properties as $f_j$. Set $s=\frac{r_2}{r_1}$ and recall the notation \eqref{tildes}. Taking logarithms of the equality $q\circ T^h = q$ then computing the total derivative we obtain
\begin{align}\label{equation-DqT=Dq}
D[\log q](\t{r}_1, \t{r}_2,\t{y}) \cdot DT^h(r_1, r_2,y) = D[\log q](r_1, r_2,y),
\end{align}
where $DT^h$ is given in \eqref{equation-DT} and 
\begin{align*}
D[\log q](r_1, r_2, y) = \begin{bmatrix}
\frac{r_2}{r_1^2 L(s, y)} + \eta_1'(r_1),& \frac{h'(y)}{r_2 L(s,y)} + \eta_2'(r_2), & -\frac{h''(y)}{L(s,y)} + \eta_3'(y)
\end{bmatrix}.
\end{align*}
Using the fact that $T^h$ is an involution and \eqref{persistence of ratios}, $r_2 = T_2^h(\t{r}_1, \t{r}_2, \t{y})= \t{y} s + h(\t{y})$. One can then check that
\[
DT^h(r_1, r_2, y) \cdot [
r_1,\,
r_2,\,
0
]^T=[
0,\,
0,\,
r_2/L(s,\t{y})
]^T.
\]
Thus multiplying both sides of equation \eqref{equation-DqT=Dq} on the right by $[r_1,\,r_2,\,0]^T$ gives
\begin{align}
 1 + r_1\eta_1'(r_1)+ r_2\eta_2'(r_2)  = r_2 g(s,\t{y}), \label{r1 r2 split}
\end{align}
where 
\[g(s,y):=\frac{\eta_3'(y)}{L(s,y)}-\frac{h''(y)}{L(s,y)^2}
.\]

Applying the operator $\frac{\p^2}{\p r_1\p r_2}$ to the left-hand side of \eqref{r1 r2 split} gives zero.  We now exploit the fact that  $\frac{\p^2}{\p r_1\p r_2}$ applied to the right hand side must equal zero to ultimately arrive at the conclusion that $h''(y)=0$.  

Note that if $f$ is differentiable then for all non-negative integers $k$ and $n$,
\begin{align}\label{equation-derivatives}
D\left[ \frac{s^k f(y)}{L(s,y)^n}\right] (s,y) &= s^{k-1}\begin{bmatrix}
\frac{1}{L(s,y)^n}, & \frac{-n f(y)}{L(s,y)^{n+1}}
\end{bmatrix}\cdot \begin{bmatrix}
kf(y) & s f'(y)\\
s & s h''(y)
\end{bmatrix}.
\end{align}
First calculate, using \eqref{equation-DT} and \eqref{equation-derivatives},
\begin{align*}
\frac{\p}{\p r_1} (r_2 g(s,\t{y})) &= r_2 Dg(s,\t{y}) \cdot \left[
\frac{\p s}{\p r_1},\,
\frac{\p \t{y}}{\p r_1}
\right]^T\\
&= s^2 Dg(s,\t{y}) \cdot \left[
-1,\,
\frac{\t{y}}{L(s,\t{y})}
\right]^T\\
&=  s\left[\frac{1}{L(s,\t{y})},\, -\frac{\eta_3'(\t{y})}{L(s,\t{y})^2}\right]\cdot \begin{bmatrix}
0 & s \eta_3''(\t{y})\\
s & sh''(\t{y})
\end{bmatrix}\cdot \left[
-1,\,
\frac{\t{y}}{L(s,\t{y})}
\right]^T\\
&\qquad - s\left[\frac{1}{L(s,\t{y})^2},\, -\frac{2h'(\t{y})}{L(s,\t{y})^3}\right]\cdot \begin{bmatrix}
0 & s h''(\t{y})\\
s & sh''(\t{y})
\end{bmatrix}\cdot \left[
-1,\,
\frac{\t{y}}{L(s,\t{y})}
\right]^T\\
&= t(s,\t{y}) := \sum_{j=2}^4 \frac{s^2 \kappa_j(\t{y})}{L(s,\t{y})^j},
\end{align*}
where
\begin{align*}
\kappa_2(y) &= y \eta_3''(y) + \eta_3'(y)\\
\kappa_3(y) &=  - y h''(y) \eta_3'(y)- y h'''(y) - 2h''(y)\\
\kappa_4(y) &= 2y h''(y)^2.
\end{align*}
Taking an $r_2$ partial derivative and multiplying by $r_1$, by \eqref{equation-DT}
\begin{align*}
0 &= r_1\frac{\p^2}{\p r_2 \p r_1} (r_2 g(s,\t{y}))  = r_1\frac{\p}{\p r_2} t(s,\t{y})\\
&=r_1 Dt(s,\t{y}) \cdot\left[
\frac{\p s}{\p r_2},\,
\frac{\p \t{y}}{\p r_2}
\right]^T\\
&=Dt(s,\t{y}) \cdot \left[
1,\,
\frac{h(\t{y})}{sL(s,\t{y})}
\right]^T.
\end{align*}
This equality holds for all $(r_1,r_2,y)\in \Os$.  Since $T^h$ is an involution on $\Os$, it also holds after interchanging $(r_1,r_2,y)\leftrightarrow (\t{r}_1,\t{r}_2,\t{y})$.  Notice that, by \eqref{persistence of ratios}, $s=\frac{r_2}{r_1}$ is unaffected by this interchange.  Therefore, applying this interchange and using \eqref{equation-derivatives} 
\begin{align*}
0=Dt(s,y) \cdot \left[
1,\,
\frac{h(y)}{sL(s,y)}
\right]^T = \sum_{j=2}^4 s\begin{bmatrix}
\frac{1}{L(s,y)^j}, & \frac{-j \kappa_j(y)}{L(s,y)^{j+1}}
\end{bmatrix}\cdot\begin{bmatrix}
2 \kappa_j(y) & s \kappa_j'(y)\\
s & s h''(y)
\end{bmatrix}\cdot \left[
1,\,
\frac{h(y)}{sL(s,y)}
\right]^T
\end{align*}
for all $(s,y)\in \frac{O_2}{O_1}\times O_3$.
Multiplying by $L(s,y)^6/s$ gives
\begin{align}\label{equation-whyisitpolynomial}
0 &= \sum_{j=2}^4 \begin{bmatrix}
L(s,y)^{5-j}, & -j \kappa_j(y)L(s,y)^{4-j}
\end{bmatrix}\cdot\begin{bmatrix}
2 \kappa_j(y) &  \kappa_j'(y)\\
s &  h''(y)
\end{bmatrix}\cdot \left[
L(s,y),\,
h(y)
\right]^T.
\end{align}

Now fix $y\in O_3$.  The right hand side is now a fourth degree polynomial in $s$ which vanishes on the open set $\frac{O_2}{O_1}$.  It must therefore vanish at all values $s\in \R$.  Taking $s=-h'(y)$ so that $L(s,y) =0$, \eqref{equation-whyisitpolynomial} gives
\[0 = -4\kappa_4(y)h(y)h''(y) = -8 y h(y) h''(y)^3.\]
The fact that $y$ and $h(y)$ are positive implies $h''(y) = 0$. Since this holds for all $y\in O_3$, which we assumed to be connected, $h$ has the form $h(y) = a+by$ where $a,b$ are real numbers. The condition $a\vee b>0$ follows from the fact that $h$ maps a subset of $(0,\infty)$ into $(0,\infty)$.  
\end{proof}

\section{Reflection and scaling}\label{sec-modifications}
We describe two procedures which preserve $T$-invariance. By applying these procedures to the four basic beta-gamma models, we can obtain a $T$-invariant model corresponding to $h(y)=a+by$ for each choice of $a,b$ such that $a\vee b>0$.

We first define the reflection procedure. Let $T$ be a polymer involution adapted to $h$ on $\Os$ and assume that $h$ is injective so that $h:O_3\to h(O_3)$ is a bijection. Define the mapping $\rho(r_1,r_2,y) := (r_2, r_1, h(y))$. Define the mapping and the random vector \begin{equation}\label{equation T hat}
\widehat{T} := \rho\circ T \circ \rho^{-1}\quad \text{ and } \quad \bigl(\widehat{R}^1,\widehat{R}^2,\widehat{Y}\bigr):=\flipID.
\end{equation} One can then check that $\widehat{T}$ is a polymer involution adapted to $h^{-1}$ on $O_2\times O_1\times h(O_3)$. Furthermore, $(R^1,R^2,Y)$ is $T$-invariant with respect to $h$ if and only if $\bigl(\widehat{R}^1,\widehat{R}^2,\widehat{Y}\bigr)$ is $\widehat{T}$-invariant with respect to $h^{-1}$.

In the directed polymer setting, this procedure of mapping 
\begin{align*}
h &\mapsto h^{-1} &
\ID & \mapsto \bigl(\widehat{R}^1,\widehat{R}^2,\widehat{Y}\bigr) &
T&\mapsto \widehat{T}
\end{align*}
corresponds to interchanging the horizontal and vertical coordinates while remaining in the same framework.  This is illustrated in Figure \ref{fig-reflection}.

\begin{figure}[ht]
  \centering
  \begin{tikzpicture}[scale=.7]
    
   \draw [help lines] (0,0) grid (5,5);	  
   \draw [thick, <->] (0,5) -- (0,0) -- (5,0);
   \draw [thick] (2,3) -- (3,3) -- (3,2);
   \node [left,scale=.9] at (0,2.5) {$R^2$};
   \node [below,scale=.9] at (2.5,0) {$R^1$};
   \node [above,scale=.9] at (2.5,3) {$Y$};
   \node [right,scale=.9] at (3,2.5) {$h(Y)$};
   \draw[fill] (3,3) circle [radius=.05];
   \node [below] at (2.5, -1) {Original};

   \begin{scope}[shift={(7.5,0)}]	   
	   \draw [help lines] (0,0) grid (5,5);	  
	   \draw [thick, <->] (0,5) -- (0,0) -- (5,0);
	   \draw [thick] (2,3) -- (3,3) -- (3,2);
	   \node [left,scale=.9] at (0,2.5) {$R^1$};
	   \node [below,scale=.9] at (2.5,0) {$R^2$};
	   \node [above,scale=.9] at (2.5,3) {$h(Y)$};
	   \node [right,scale=.9] at (3,2.5) {$Y$};
	   \draw[fill] (3,3) circle [radius=.05]; 
   	   \node [below] at (2.5, -1) {Reflected};
   \end{scope}
   
   \begin{scope}[shift={(15,0)}]	   
	   \draw [help lines] (0,0) grid (5,5);	  
	   \draw [thick, <->] (0,5) -- (0,0) -- (5,0);
	   \draw [thick] (2,3) -- (3,3) -- (3,2);
	   \node [left,scale=.9] at (0,2.5) {$\widehat{R}^2$};
	   \node [below,scale=.9] at (2.5,0) {$\widehat{R}^1$};
	   \node [above,scale=.9] at (2.5,3) {$\widehat{Y}$};
	   \node [right,scale=.9] at (3,2.5) {$h^{-1}(\widehat{Y})$};
	   \draw[fill] (3,3) circle [radius=.05]; 
	   \node [below] at (2.5, -1) {$\flipID=\bigl(\widehat{R}^1,\widehat{R}^2,\widehat{Y}\bigr)$};
   \end{scope}

  \end{tikzpicture}
    \caption{Reflection}
  \label{fig-reflection}
\end{figure}
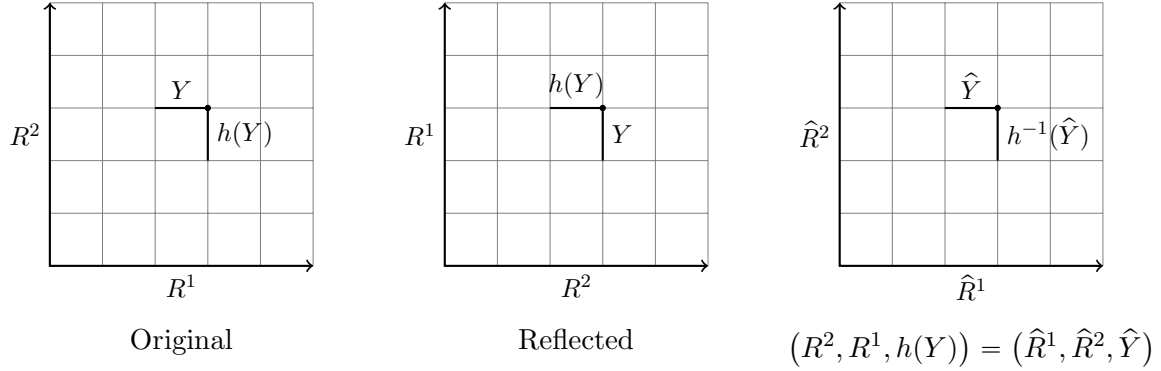

We now define the scaling procedure. If $O\subset (0,\infty)$ and $c$ is a positive constant, define $cO:=\{cx: x\in O\}$. Note that $cO\subset (0,\infty)$.
Let $c_1,c_2$ be positive constants. Let $T$ be a polymer involution adapted to $h$ on $\Os$. Define the mapping $\sigma(r_1,r_2,y) := (c_1r_1,c_2r_2,c_1y)$. Define the two mappings and the random vector 
\begin{align} \label{equation Tcheck hcheck}
\widecheck{T}&:=\sigma\circ T \circ \sigma^{-1}& \widecheck{h}(y) &:= c_2 h(\tfrac{y}{c_1})& (\widecheck{R}^1,\widecheck{R}^2, \widecheck{Y})&:=\scaleID.
\end{align}
One can check that $\widecheck{T}$ is a polymer involution adapted to $\widecheck{h}$ on $\scaleOs$. Furthermore, $\ID$ is $T$-invariant with respect to $h$ if and only if $(\widecheck{R}^1,\widecheck{R}^2, \widecheck{Y})$ is $\widecheck{T}$-invariant with respect to $\widecheck{h}$.

In the directed polymer setting, this procedure of mapping

\begin{align*}
h & \mapsto \widecheck{h} &
\ID & \mapsto (\widecheck{R}^1,\widecheck{R}^2, \widecheck{Y}) &
T & \mapsto \widecheck{T}
\end{align*}
corresponds to scaling the horizontal axis weights by $c_1$ and the vertical axis weights by $c_2$ while remaining in the same framework. This procedure is illustrated in Figure \ref{fig-scaling}.

\begin{figure}[ht]
  \centering
  \begin{tikzpicture}[scale=.7]
    
   \draw [help lines] (0,0) grid (5,5);	  
   \draw [thick, <->] (0,5) -- (0,0) -- (5,0);
   \draw [thick] (2,3) -- (3,3) -- (3,2);
   \node [left,scale=.9] at (0,2.5) {$R^2$};
   \node [below,scale=.9] at (2.5,0) {$R^1$};
   \node [above,scale=.9] at (2.5,3) {$Y$};
   \node [right,scale=.9] at (3,2.5) {$h(Y)$};
   \draw[fill] (3,3) circle [radius=.05]; 
   \node [below] at (2.5, -1) {Original};

   \begin{scope}[shift={(7.5,0)}]	   
	   \draw [help lines] (0,0) grid (5,5);	  
	   \draw [thick, <->] (0,5) -- (0,0) -- (5,0);
	   \draw [thick] (2,3) -- (3,3) -- (3,2);
	   \node [left,scale=.9] at (0,2.5) {$c_2R^2$};
	   \node [below,scale=.9] at (2.5,0) {$c_1R^1$};
	   \node [above,scale=.9] at (2.5,3) {$c_1Y$};
	   \node [right,scale=.9] at (3,2.5) {$c_2h(Y)$};
	   \draw[fill] (3,3) circle [radius=.05]; 
   \node [below] at (2.5, -1) {Scaled};
   \end{scope}
   
   \begin{scope}[shift={(15,0)}]	   
	   \draw [help lines] (0,0) grid (5,5);	  
	   \draw [thick, <->] (0,5) -- (0,0) -- (5,0);
	   \draw [thick] (2,3) -- (3,3) -- (3,2);
	   \node [left,scale=.9] at (0,2.5) {$\widecheck{R}^1$};
	   \node [below,scale=.9] at (2.5,0) {$\widecheck{R}^2$};
	   \node [above,scale=.9] at (2.5,3) {$\widecheck{Y}$};
	   \node [right,scale=.9] at (3,2.5) {$\widecheck{h}(\widecheck{Y})$};
	   \draw[fill] (3,3) circle [radius=.05]; 
	   \node [below] at (2.5, -1) {$\scaleID=(\widecheck{R}^1,\widecheck{R}^2,\widecheck{Y})$};
   \end{scope}

  \end{tikzpicture}
    \caption{Scaling}
  \label{fig-scaling}
\end{figure}

One can also check that the reflection and scaling procedures commute.
By using the reflection and scaling procedures, the following lemma reduces the existence and uniqueness of $T$-invariant models corresponding to $h(y)=a+by$ where $a\vee b>0$ to the existence and uniqueness for values $(a,b)$ = $(0,1),(1,0),(1,-1)$, and $(-1,1)$.

For real numbers $a,b$ such that $a\vee b>0$, define
\begin{equation}
T^{(a,b)}(r_1,r_2,y):=\Bigl(y+(a+by)\frac{r_1}{r_2},\,y\frac{r_2}{r_1} + (a+by),\,\frac{r_1(r_2-a)}{r_2+br_1}\Bigr).\label{T-ab}
\end{equation}
One can check that when $h(y)=a+by$, \eqref{T=G tensor id...} implies that $T^{h}=T^{(a,b)}$.  The domain of $T^{(a,b)}$ is discussed prior to Lemma \ref{lemma-involution-properties}.

\begin{lemma}\label{lem-scaling}
Let $a,b$ be real numbers satisfying $a\vee b>0$, $h(y)=a+by$, and $T=T^{(a,b)}$ as defined in \eqref{T-ab}. Let $R^1, R^2,$ and $Y$ be random variables.
\begin{enumerate}
\item If $a=0$ and $b>0$, then $(R^1, R^2, Y)$ is $T$-invariant with respect to h if and only if $\left(R^1, \frac{1}{b}R^2, Y\right)$ is $T^{(0,1)}$-invariant with respect to $\widecheck{h}(y)=y$. 
\item If  $a>0$ and $b=0$, then $(R^1, R^2, Y)$ is $T$-invariant with respect to $h$ if and only if $\left(R^1, \frac{1}{a}R^2, Y \right)$ is $T^{(1,0)}$-invariant with respect to $\widecheck{h}(y)=1$ 
\item If $a>0$ and $b<0$, then $(R^1, R^2, Y)$ is $T$-invariant with respect to $h$ if and only if $\left(-\frac{b}{a}R^1, \frac{1}{a}R^2, -\frac{b}{a}Y\right)$ is $T^{(1,-1)}$-invariant with respect to $\widecheck{h}(y)=1-y$.
\item If $a<0$ and $b>0$, then $(R^1, R^2, Y)$ is $T$-invariant with respect to $h$ if and only if $\left( -\frac{b}{a}R^1,-\frac{1}{a} R^2, -\frac{b}{a} Y \right)$ is $T^{(-1,1)}$-invariant with respect to $\widecheck{h}(y)=y-1$. 
\item If $a,b>0$, then $(R^1, R^2, Y)$ is $T$-invariant with respect to $h$ if and only if $\left(\frac{b}{a}R^1, \frac{1}{a}R^2,  \frac{b}{a}Y\right)$ is $T^{(1,1)}$-invariant with respect to $\widecheck{h}(y)=y+1$. 
\item If $a=1$ and $b=1$, then $\ID$ is $T$-invariant with respect to $h$ if and only if $(R^2,R^1,1+Y)$ is $T^{(-1,1)}$-invariant with respect to $h^{-1}(y) = y-1$.
\end{enumerate}
\end{lemma}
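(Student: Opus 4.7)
The plan is to treat each of the six items as a direct instance of the reflection and/or scaling procedures already shown to preserve $T$-invariance. For parts (a)--(e) the work is to choose the scaling constants $c_1,c_2>0$ so that the rescaled function $\widecheck h(y)=c_2 h(y/c_1)$ equals the prescribed normal form, and then to read off the rescaled random vector $(c_1R^1,c_2R^2,c_1Y)$ directly from \eqref{equation Tcheck hcheck}. Part (f) is a single application of the reflection procedure \eqref{equation T hat} to $h(y)=1+y$, which is injective on $O_3\subset(0,\infty)$, with $h^{-1}(y)=y-1$.

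Concretely, for a general $h(y)=a+by$ with $a\vee b>0$ the scaled function is
\begin{equation*}
\widecheck h(y)\;=\;c_2\,h(y/c_1)\;=\;c_2 a+\frac{c_2 b}{c_1}\,y,
\end{equation*}
so matching coefficients solves the problem in each case: in (a) take $c_1=1$, $c_2=1/b$ to obtain $\widecheck h(y)=y$; in (b) take $c_1=1$, $c_2=1/a$ to obtain $\widecheck h(y)=1$; in (c) take $c_2=1/a$ and $c_1=-b/a$ (positive since $b<0<a$) to obtain $\widecheck h(y)=1-y$; in (d) take $c_2=-1/a$ and $c_1=-b/a$ (both positive since $a<0<b$) to obtain $\widecheck h(y)=y-1$; in (e) take $c_2=1/a$ and $c_1=b/a$ to obtain $\widecheck h(y)=y+1$. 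Plugging the same $(c_1,c_2)$ into $(c_1R^1,c_2R^2,c_1Y)$ reproduces exactly the triples listed in the statement. Finally, since the scaling (resp.\ reflection) procedure was shown to preserve $T$-invariance in both directions, each ``if and only if'' follows immediately.

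The main step, and really the only one where a small verification is needed, is confirming that the particular $T=T^{(a,b)}$ written out in \eqref{T-ab} conjugates under $\sigma$ and $\rho$ to the canonical involutions $T^{(0,1)},T^{(1,0)},T^{(1,-1)},T^{(-1,1)},T^{(1,1)}$ in each case. This is a direct check: the formulas \eqref{equation T hat} and \eqref{equation Tcheck hcheck} say that $\widehat T$ and $\widecheck T$ are polymer involutions adapted to $h^{-1}$ and $\widecheck h$, and by the uniqueness clause of Lemma \ref{lemma TFAE} the polymer involution adapted to $a'+b'y$ on the appropriate set is $T^{(a',b')}$. There is no obstacle here beyond bookkeeping; the only small subtlety is checking that $c_1>0$ in parts (c) and (d), which is built into the sign hypotheses on $a$ and $b$, and that $h$ is injective in part (f), which holds because $h(y)=1+y$ is strictly increasing.
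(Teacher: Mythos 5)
Your proof is correct and follows essentially the same route as the paper: the same choices of $(c_1,c_2)$ for parts (a)--(e) via the scaling procedure, and the same single application of the reflection procedure with $h^{-1}(y)=y-1$ for part (f). The only cosmetic difference is that you explicitly invoke the uniqueness clause of Lemma \ref{lemma TFAE} to identify $\widecheck{T}$ with $T^{(ac_2,\,bc_2/c_1)}$, where the paper leaves this as a direct check.
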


\begin{proof}
Let $c_1,c_2$ be positive constants.  After applying the scaling procedure with $(c_1,c_2)$, with notation as in \eqref{equation Tcheck hcheck}, one can check that \[
\widecheck{h}(y)=ac_2+\frac{bc_2}{c_1}y \quad\text{ and }\quad\widecheck{T}=T^{(ac_2,\frac{bc_2}{c_1})}.
\]  Recall that $(\widecheck{R}^1,\widecheck{R}^2,\widecheck{Y})=\scaleID$ is $\widecheck{T}$-invariant with respect to $\widecheck{h}$ if and only if $\ID$ is $T$-invariant with respect to $h$.  Now (a) through (e) follow by taking \[(c_1,c_2)=\left(1,\frac{1}{b}\right), \left(1,\frac{1}{a}\right), \left(-\frac{b}{a},\frac{1}{a}\right), \left(-\frac{b}{a},-\frac{1}{a}\right), \left(\frac{b}{a},\frac{1}{a}\right)\] respectively. 

For part (f), after applying the reflection procedure, with notation as in \eqref{equation T hat}, one can check that $\widehat{T}=T^{(-1,1)}$. Since $(\widehat{R}^1, \widehat{R}^2, \widehat{Y}) = (R^2, R^1, 1+Y)$ is $\widehat{T}$-invariant with respect to $h^{-1}(y)= y-1$ if and only if $(R^1, R^2, Y)$ is $T$-invariant with respect to $h(y)=y+1$, the result follows.
\end{proof}

\section{Proof of Theorem \ref{thm-classify-linear}}\label{sec-proof of first thm}

The following two theorems, due to Seshadri and Weso{\l}owski (2003) and Lukacs (1955) give characterizations of gamma and beta random variables, which will be used in the sequel.

\begin{theorem}[\cite{SW2003}]\label{thm-SeWe}
Let $A$ and $B$ be non-degenerate independent random variables taking values in $(0,1)$. Then the pair $(C,D):=\left(\frac{1-B}{1-AB},\, 1-AB\right)$ is independent if and only if there exist positive constants $p,q,r$ such that $(A,B)\sim \text{Be}(p,q)\otimes \text{Be}(p+q,r)$, in which case $(C,D)\sim \text{Be}(r,q)\otimes \text{Be}(r+q,p)$.
\end{theorem}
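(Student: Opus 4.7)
My plan is to treat the two directions of the biconditional separately, with sufficiency being a direct change-of-variables computation and necessity carrying the real content.

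For sufficiency ($\Leftarrow$), assume $(A,B)\sim\text{Be}(p,q)\otimes\text{Be}(p+q,r)$. The map $(a,b)\mapsto(c,d)=\bigl((1-b)/(1-ab),\,1-ab\bigr)$ is a smooth bijection of $(0,1)^2$ onto itself with inverse $a=(1-d)/(1-cd)$, $b=1-cd$ and Jacobian determinant of magnitude $d/(1-cd)$. Substituting the product beta density and collecting terms, the factors $(1-cd)^{\pm k}$ telescope to zero net power, and the resulting joint density of $(C,D)$ factors as a product of a $\text{Be}(r,q)$ density in $c$ and a $\text{Be}(r+q,p)$ density in $d$. This is purely a bookkeeping exercise.

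For necessity ($\Rightarrow$), assume $A,B$ admit densities $f_A,f_B$ on $(0,1)$ and $(C,D)$ has product density $g(c)h(d)$. The change-of-variables formula translates independence of $(C,D)$ into the functional equation
\[
f_A\!\left(\frac{1-d}{1-cd}\right)f_B(1-cd)\cdot\frac{d}{1-cd}=g(c)\,h(d),\qquad (c,d)\in(0,1)^2.
\]
Under enough smoothness, take logarithms and apply $\partial^2/\partial c\,\partial d$; the right-hand side vanishes and the chain rule produces an algebraic identity relating $u(a):=(\log f_A)'(a)$ at $a=(1-d)/(1-cd)$ and $v(b):=(\log f_B)'(b)$ at $b=1-cd$. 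Because the pair $(a,b)$ can be varied essentially independently by choice of $(c,d)$, algebraic manipulation should force both $u$ and $v$ to be of the form $u(a)=(p-1)/a-(q-1)/(1-a)$ and analogously for $v$. Integration yields beta densities for $A$ and $B$, and matching exponents in the original functional equation pins down the parameters as $\text{Be}(p,q)$ and $\text{Be}(p+q,r)$.

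The principal obstacle is the regularity gap between the stated hypotheses (non-degeneracy only) and the differentiation above. The cleanest workaround is to replace the pointwise functional equation by a Mellin-transform identity: every ingredient of $(C,D)=\bigl((1-B)/(1-AB),\,1-AB\bigr)$ is a product or ratio, so one can extract multiplicative identities among the Mellin transforms of $B$, $1-AB$, and $1-B$. These identities, combined with analytic continuation and uniqueness of Mellin transforms of distributions supported on a bounded interval, force the transforms to be ratios of gamma functions of the shape $\Gamma(s+p)/\Gamma(s+p+q)$, uniquely characterizing the beta distributions. Verifying existence of the transforms on a common vertical strip and justifying the analytic continuation is the technical heart of the proof.
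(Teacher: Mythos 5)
First, a point of reference: the paper does not prove this statement at all --- it is imported verbatim from Seshadri--Weso{\l}owski \cite{SW2003} and used as a black box in the proof of Proposition \ref{prop fundamental 1}. So there is no in-paper argument to compare yours against; your proposal has to stand on its own.

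Your ($\Leftarrow$) direction is correct and complete. The inverse map $a=(1-d)/(1-cd)$, $b=1-cd$ and the Jacobian magnitude $d/(1-cd)$ are right, and the substitution does produce $c^{r-1}(1-c)^{q-1}\cdot d^{r+q-1}(1-d)^{p-1}$ with the powers of $(1-cd)$ cancelling exactly, giving $\text{Be}(r,q)\otimes\text{Be}(r+q,p)$.

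The ($\Rightarrow$) direction, which is the actual content of the theorem, has a genuine gap. Your primary argument assumes densities and enough smoothness to differentiate twice, neither of which is granted by the hypotheses (only non-degeneracy), and even granting that, the decisive step --- ``algebraic manipulation should force $u$ and $v$ to be of the form $(p-1)/a-(q-1)/(1-a)$'' --- is asserted rather than executed; solving that functional equation is precisely where the work lies. Your proposed Mellin-transform repair does not work as described: the variables $1-AB$ and $(1-B)/(1-AB)$ are \emph{not} multiplicative functions of $A$ and $B$, so their Mellin transforms do not factor through those of $A$ and $B$. The only clean multiplicative identities available are $\E[(1-B)^s]=\E[C^s]\,\E[D^s]$ (from $CD=1-B$) and $\E[(1-D)^s]=\E[A^s]\,\E[B^s]$ (from $AB=1-D$), and these two one-parameter relations involve the unknown laws of $C$ and $D$ on their right-hand sides; they do not close into a system that forces gamma-function ratios. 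A workable moment-based route must instead exploit the full two-parameter family of identities $\E[C^mD^n]=\E[(1-B)^m(1-AB)^{n-m}]$, expand via independence of $A$ and $B$, and run a nontrivial recursion (distributions on $(0,1)$ being moment-determined) --- or use the constancy-of-regression machinery of the cited paper. As written, the necessity half is a plan with its hardest step missing, which is presumably why the authors cite \cite{SW2003} rather than prove it.
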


\begin{theorem}[\cite{L1955}]\label{thm-lukacs}
Let $A$ and $B$ be non-degenerate independent positive random variables. Then the pair $(C,D):=\left(A+B,\, \frac{A}{A+B}\right)$ is independent if and only if there exist positive constants $\lambda_A,\lambda_B,\beta$ such that $(A,B)\sim \text{Ga}(\lambda_A,\beta)\otimes \text{Ga}(\lambda_B,\beta)$, in which case $(C,D)\sim \text{Ga}(\lambda_A+\lambda_B,\beta)\otimes \text{Be}(\lambda_A,\lambda_B)$.
\end{theorem}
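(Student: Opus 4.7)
The sufficiency direction is a routine change of variables: for independent $A\sim\mathrm{Ga}(\lambda_A,\beta)$ and $B\sim\mathrm{Ga}(\lambda_B,\beta)$, the bijection $(a,b)\mapsto(c,d)=(a+b,\,a/(a+b))$ has inverse $(cd,c(1-d))$ with Jacobian $c$, and the joint density of $(C,D)$ factors as the product of the $\mathrm{Ga}(\lambda_A+\lambda_B,\beta)$ density in $c$ and the $\mathrm{Be}(\lambda_A,\lambda_B)$ density in $d$, simultaneously producing independence and identifying the marginals. So I focus on necessity.

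For necessity, I work with the Laplace transforms $\phi_A(s)=E[e^{-sA}]$ and $\phi_B(s)=E[e^{-sB}]$, which are smooth and strictly positive on $(0,\infty)$. The key algebraic identity $s_1A+s_2B=C\bigl(s_2+(s_1-s_2)D\bigr)$ gives
\[\phi_A(s+u)\phi_B(s)=E\bigl[e^{-sC}e^{-uCD}\bigr]\qquad(s>0,\ |u|\text{ small}).\]
Differentiating once in $u$ at $u=0$, invoking independence of $C$ and $D$ so that $E[CDe^{-sC}]=E[D]\,E[Ce^{-sC}]$, and using $E[Ce^{-sC}]=-(\phi_A\phi_B)'(s)$, yields $q\,\phi_A'\phi_B=p\,\phi_A\phi_B'$, where $p=E[D]$ and $q=1-p$. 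Dividing by $\phi_A\phi_B$ and integrating against $\phi_A(0)=\phi_B(0)=1$ produces the first key relation
\[\phi_B=\phi_A^{q/p}.\]
Differentiating the displayed identity twice in $u$ at $u=0$ gives $\phi_A''\phi_B=E[D^2]\,(\phi_A\phi_B)''$, and substituting $\phi_B=\phi_A^{q/p}$ collapses this, after routine algebra, to a second-order ODE
\[\phi_A(s)\,\phi_A''(s)=k\,\phi_A'(s)^2,\qquad k=\frac{E[D^2]\,(1-E[D])}{E[D]\,(E[D]-E[D^2])}.\]

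Setting $\psi=\log\phi_A$ converts the ODE to $\psi''=(k-1)(\psi')^2$, a separable first-order equation in $\psi'$ with initial data $\psi(0)=0$ and $\psi'(0)=-E[A]<0$. Non-degeneracy of $A$ forces $D$ to be non-degenerate (if $D=c$ a.s.\ then $B=\tfrac{1-c}{c}A$, contradicting independence), whence $E[D^2]>E[D]^2$; a short computation shows this is equivalent to $k>1$. The alternatives $k=1$ (which would force $\phi_A(s)=e^{-E[A]s}$, a degenerate Laplace transform) and $k<1$ (which drives $\phi_A\to 0$ at a finite $s^*>0$, impossible since $\phi_A(s)=E[e^{-sA}]>0$ on $[0,\infty)$) are therefore ruled out. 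Integrating then yields $\phi_A(s)=(1+s/\beta)^{-\lambda_A}$ with $\lambda_A=1/(k-1)$ and $\beta=1/\bigl((k-1)E[A]\bigr)$, which is the Laplace transform of $\mathrm{Ga}(\lambda_A,\beta)$. Combined with $\phi_B=\phi_A^{q/p}$ this identifies $B\sim\mathrm{Ga}(\lambda_B,\beta)$ with $\lambda_B=(q/p)\lambda_A$, same scale parameter. The distribution of $(C,D)$ then follows from the sufficiency calculation. The main obstacle is the algebraic bookkeeping collapsing the twice-differentiated equation into the ODE and the case analysis ruling out $k\le 1$; differentiation under the expectation is a routine application of dominated convergence, since $CDe^{-sC-uCD}\le CDe^{-sC/2}$ for $|u|\le s/2$, and $CDe^{-sC/2}$ is integrable for $s>0$.
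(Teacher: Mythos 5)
The paper does not prove this statement at all: Theorem \ref{thm-lukacs} is imported verbatim from Lukacs (1955) as a known characterization and is used as a black box in the proof of Proposition \ref{prop fundamental 1}, so there is no in-paper argument to compare yours against. Your proof itself is correct and is essentially the classical transform-based argument for Lukacs' theorem: the identity $s_1A+s_2B=C\bigl(s_2+(s_1-s_2)D\bigr)$ is right, the two moment factorizations extracted from independence of $C$ and $D$ do yield $q\,\phi_A'\phi_B=p\,\phi_A\phi_B'$ and $\phi_A''\phi_B=E[D^2](\phi_A\phi_B)''$, the algebra collapsing these to $\phi_A\phi_A''=k(\phi_A')^2$ checks out (with your stated $k$, and indeed $k>1\Leftrightarrow\operatorname{Var}(D)>0$ because $E[D(1-D)]>0$), and the case analysis on $k$ correctly isolates the gamma Laplace transform, after which uniqueness of Laplace transforms and $\phi_B=\phi_A^{q/p}$ finish the job. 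Note that you only use two necessary consequences of independence, which is all the ``only if'' direction requires. The one loose end is your initial condition $\psi'(0)=-E[A]<0$: a priori $E[A]$ could be infinite, in which case $c_0=\lim_{s\downarrow 0}(-1/\psi'(s))=0$ and the integrated solution $\psi(s)=-\tfrac{1}{k-1}\log((k-1)s)+C$ blows up as $s\downarrow 0$, contradicting $\psi(0)=0$; so finiteness of $E[A]$ is forced rather than assumed, and you should say a word to exclude that case. This is a one-line patch, not a structural gap.
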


Notice that the mapping $(A,B)\mapsto \left(A+B,\, A/(A+B)\right)$ has the inverse $(A,B)\mapsto \left(AB,\, A(1-B)\right)$. The following statement is a corollary of Theorem \ref{thm-lukacs}.

\begin{corollary}\label{corollary-Lu}
Let $A$ and $B$ be non-degenerate independent random variables. Further assume that $A$ is positive and $B$ takes values in $(0,1)$. Then the pair $(C,D):= (AB,\, A(1-B))$ is independent if and only if there exist positive constants $\lambda_A,\lambda_B,\beta$ such that $(A,B)\sim \text{Ga}(\lambda_A+\lambda_B,\beta)\otimes \text{Be}(\lambda_A,\lambda_B)$ in which case $(C,D)\sim \text{Ga}(\lambda_A,\beta)\otimes \text{Ga}(\lambda_B,\beta)$.\\
\end{corollary}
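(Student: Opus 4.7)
The plan is to reduce the claim directly to Theorem \ref{thm-lukacs} via the change of variables indicated in the sentence preceding the corollary. Define $\phi:(0,\infty)^2 \to (0,\infty)\times(0,1)$ by $\phi(x,y) = (x+y,\, x/(x+y))$; this is a bijection with inverse $\phi^{-1}(u,v) = (uv,\, u(1-v))$. Set $X := AB$ and $Y := A(1-B)$, so that $(X,Y) = \phi^{-1}(A,B)$ and, symmetrically, $\phi(X,Y) = (X+Y,\, X/(X+Y)) = (A,B)$. First I would record that $X$ and $Y$ are non-degenerate positive random variables: if $AB$ were a.s.\ constant, then $B = c/A$ would fail to be independent of $A$, contradicting the hypothesis, and analogously for $A(1-B)$.

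For the forward direction, suppose $(C,D) = (X,Y)$ is independent. Since its image $\phi(X,Y) = (A,B)$ is independent by hypothesis, Theorem \ref{thm-lukacs} applied to the pair $(X,Y)$ yields positive constants $\lambda_X, \lambda_Y, \beta$ with $(X,Y) \sim \text{Ga}(\lambda_X,\beta) \otimes \text{Ga}(\lambda_Y,\beta)$, and the same theorem identifies the law of $(A,B) = \phi(X,Y)$ as $\text{Ga}(\lambda_X+\lambda_Y,\beta) \otimes \text{Be}(\lambda_X,\lambda_Y)$. Taking $\lambda_A = \lambda_X$ and $\lambda_B = \lambda_Y$ gives the claimed distributional form.

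For the converse, assume $(A,B) \sim \text{Ga}(\lambda_A+\lambda_B,\beta) \otimes \text{Be}(\lambda_A,\lambda_B)$. Choose auxiliary independent variables $X' \sim \text{Ga}(\lambda_A,\beta)$ and $Y' \sim \text{Ga}(\lambda_B,\beta)$; by Theorem \ref{thm-lukacs}, $\phi(X',Y') \sim \text{Ga}(\lambda_A+\lambda_B,\beta) \otimes \text{Be}(\lambda_A,\lambda_B)$, which matches the law of $(A,B)$. Applying $\phi^{-1}$, a continuous bijection, yields $(X',Y') \stackrel{d}{=} \phi^{-1}(A,B) = (C,D)$. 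Hence $(C,D)$ is independent with marginals $\text{Ga}(\lambda_A,\beta)$ and $\text{Ga}(\lambda_B,\beta)$, as claimed.

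There is essentially no obstacle beyond bookkeeping; all the real content is already packaged in Lukacs' theorem, and the only mild subtlety is the non-degeneracy of $X$ and $Y$, which is needed in order to invoke Theorem \ref{thm-lukacs} and which I dispatch at the start.
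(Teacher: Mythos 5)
Your proof is correct and follows exactly the route the paper intends: the remark preceding the corollary ("the mapping $(A,B)\mapsto(A+B,A/(A+B))$ has the inverse $(A,B)\mapsto(AB,A(1-B))$") is the paper's entire justification, and you have simply fleshed out that change-of-variables reduction to Theorem \ref{thm-lukacs}, including the non-degeneracy check of $AB$ and $A(1-B)$ that the paper leaves implicit.
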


The next lemma constrains the sets on which $T^{(a,b)}$ (as defined by \eqref{T-ab}) can be a polymer involution.  To specify this constraint, we define the following sets. For real numbers $(a,b)$ such that $a\vee b>0$, 
\begin{gather*}
V_a^{\pm}:=\{x>0: \pm (x-a)>0\},\qquad
W_{a,b}^{\pm}:=\{x>0: \pm (a+bx)>0\}\\
D_{a,b}^{\pm}:=W_{a,b}^{\pm}\times V_a^{\pm}\times W_{a,b}^{+}.
\end{gather*}

\begin{lemma}\label{lemma-involution-properties}
Let $a,b$ be real numbers satisfying $a\vee b>0$.  Let $O_j\subset (0,\infty)$ for $j=1,2,3$ such that $O_3$ is not a singleton.  If $T^{(a,b)}$, as defined in \eqref{T-ab}, is a polymer involution on $\Os$ with respect to $h$ of the form $h(y) = a+by$ then $\Os \subset D_{a,b}^{+}$ or $\Os \subset D_{a,b}^{-}$ assuming $D_{a,b}^{\pm}$ is non-empty.
\end{lemma}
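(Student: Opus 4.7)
The plan is to use that $T^{(a,b)}$ must map $\Os$ into itself, focusing on the constraint placed on the third coordinate $\tilde y$ of $T^{(a,b)}(r_1,r_2,y)$. The positivity of $h$ on $O_3$ immediately yields $O_3\subset W_{a,b}^+$, handling the third factor. For the first two factors I need to show that $\operatorname{sgn}(a+br_1)$ is constant on $O_1$, that $\operatorname{sgn}(r_2-a)$ is constant on $O_2$, and that these two signs agree.

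First I would record the two explicit formulas
\begin{align*}
\tilde y &= \frac{r_1(r_2-a)}{r_2+br_1}, & a+b\tilde y &= \frac{r_2(a+br_1)}{r_2+br_1},
\end{align*}
the second being a one-line algebraic simplification of the first. The requirements $\tilde y>0$ (so that $\tilde y\in O_3$) and $a+b\tilde y>0$ (so that $\tilde y\in O_3\subset W_{a,b}^+$), together with $r_1,r_2>0$, translate into: both $(r_2-a)(r_2+br_1)$ and $(a+br_1)(r_2+br_1)$ are strictly positive. Multiplying these two inequalities and cancelling the positive factor $(r_2+br_1)^2$ shows that $(r_2-a)(a+br_1)>0$ for every $(r_1,r_2)\in O_1\times O_2$. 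In particular $r_2+br_1\neq 0$, so the formulas above are well-defined throughout $\Os$.

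The last step is a quick separation-of-variables argument on this joint sign condition. Pick any $(r_1^0,r_2^0)\in O_1\times O_2$ (we may assume $\Os\neq\emptyset$, else the conclusion is vacuous) and set $\sigma:=\operatorname{sgn}(r_2^0-a)=\operatorname{sgn}(a+br_1^0)\in\{+,-\}$. Applying the joint sign condition to the pair $(r_1,r_2^0)$ as $r_1$ varies over $O_1$ forces $\operatorname{sgn}(a+br_1)=\sigma$ for every $r_1\in O_1$, i.e., $O_1\subset W_{a,b}^\sigma$; the symmetric argument applied to $(r_1^0,r_2)$ as $r_2$ varies over $O_2$ yields $O_2\subset V_a^\sigma$. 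Combined with $O_3\subset W_{a,b}^+$ this gives $\Os\subset W_{a,b}^\sigma\times V_a^\sigma\times W_{a,b}^+=D_{a,b}^\sigma$ as claimed, and the relevant $D_{a,b}^\sigma$ is non-empty because it contains the non-empty set $\Os$.

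I do not anticipate any serious obstacle here; the only non-obvious input is the factorization of $a+b\tilde y$, after which the sign bookkeeping is essentially automatic. The hypothesis that $O_3$ is not a singleton does not appear to play a role in this particular lemma, so I suspect it is imposed as a running assumption used elsewhere in Section \ref{sec-proof of first thm}.
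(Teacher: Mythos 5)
Your proof is correct and follows essentially the same route as the paper's: both arguments extract the sign constraints from the requirement that $\t{y}=T^{(a,b)}_3(r_1,r_2)$ lie in $O_3\subset(0,\infty)$ and that $h(\t{y})=a+b\t{y}$ be positive, and your factorization $a+b\t{y}=r_2(a+br_1)/(r_2+br_1)$ is just the second identity in \eqref{7-3} rewritten; your separation-of-variables step and the paper's case split on whether $O_1\cap W_{a,b}^{+}=\emptyset$ are interchangeable pieces of sign bookkeeping. The one substantive divergence is where strictness comes from, and it answers the question you raise at the end. The paper first records the weaker trichotomy that $a+br_1$, $\tfrac{r_2}{r_1}+b$, $r_2-a$ are all positive, all negative, or all zero, and then rules out the zero case by invoking Lemma \ref{lemma TFAE}: injectivity of $y\mapsto y(\tfrac{r_2}{r_1}+b)+a$ on $O_3$ forces $\tfrac{r_2}{r_1}+b\neq 0$, and this inference has content only because $O_3$ has at least two points --- that is exactly where the non-singleton hypothesis enters the paper's proof. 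You instead obtain the strict inequality $(r_2-a)(a+br_1)>0$ directly from $\t{y}>0$ and $a+b\t{y}>0$, using only that $T^{(a,b)}_3$ is defined (so $r_2+br_1\neq 0$) for $T^{(a,b)}$ to be a map on $\Os$ at all; under that reading the non-singleton assumption is indeed not needed for this lemma, so your closing remark is justified rather than a gap.
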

\begin{proof} We first show the following holds:
\begin{itemize}
\item[(i)]  For all $(r_1,r_2)\in O_1\times O_2$, the three numbers $a+br_1, \frac{r_2}{r_1}+b, r_2-a$ are all either strictly positive, strictly negative, or equal to zero.
\end{itemize}
Fix $(r_1,r_2,y)\in \Os$ and put $\t{y}=T^{(a,b)}_3(r_1,r_2)=\frac{r_1(r_2-a)}{r_2+br_1}$. Then the following two equalities hold
\begin{gather}
r_2-a=\t{y}(\frac{r_2}{r_1}+b)\label{7-3}, \qquad
a+br_1=\frac{r_1}{r_2}(a+b\t{y})(\frac{r_2}{r_1}+b).
\end{gather}
Since $T^{(a,b)}$ is an involution on $\Os$, $\t{y}\in O_3$.  Recall that, by Definition \ref{def-polymer involution}, $h$ maps $O_3\rightarrow (0,\infty)$.  Therefore $O_3\subset W_{a,b}^{+}$ and the four numbers  $r_1$, $r_2$, $\t{y}$, and $h(\t{y})=a+b\t{y}$ are all positive.   \eqref{7-3} now gives (i).

By Lemma \ref{lemma TFAE}, for all $(r_1,r_2)\in O_1\times O_2$ the mapping $O_3\owns y\mapsto T_2^{(a,b)}(r_1,r_2,y)=y(\frac{r_2}{r_1}+b)+a$ is injective.  Therefore $\frac{r_2}{r_1}+b$ does not vanish for any $(r_1,r_2)\in O_1\times O_2$.  Thus, by (i) 
\begin{equation}
O_1\times O_2\subset \left(W_{a,b}^{+}\times V_a^{+}\right)\cup \left(W_{a,b}^{-}\times V_a^{-}\right).  \label{Os constraint}
\end{equation}
If $O_1\cap W_{a,b}^{+}=\emptyset$, then by \eqref{Os constraint} $O_1\times O_2\subset W_{a,b}^{-}\times V_a^{-}$.  In this case $\Os\subset D_{a,b}^{-}$.  On the other hand, if $O_1\cap W_{a,b}^{+}\neq \emptyset$ then there exists $r_1\in O_1$ such that  $a+br_1>0$.  By (i), $r_2-a>0$ for all $r_2\in O_2$.  Thus $O_2\subset V_a^{+}$.  Now \eqref{Os constraint} implies that $O_1\times O_2\subset W_{a,b}^{+}\times V_a^{+}$ which gives  $\Os\subset D_{a,b}^{+}$, completing the proof.

\end{proof}
\noindent Using \eqref{7-3} one can in fact check that $T^{(a,b)}$ is an involution on both $D^{+}_{a,b}$ and $D^{-}_{a,b}$ assuming they are non-empty.


The following proposition characterizes $T^{h}$-invariant models corresponding to $h(y)=a+by$ when $(a,b)=(0,1),\, (1,0),\, (1,-1),$ and $(-1,1)$.

\begin{proposition}\label{prop fundamental 1}
For $a,b$ real numbers, let $h(y) = a+by$ and assume $T^{(a,b)}$, as defined in \eqref{T-ab}, is a polymer involution adapted to $h$ on $\Os\subset(0,\infty)^3$. Assume that $(R^1,R^2,Y)$ are non-degenerate independent random variables taking values in $\Os$.
\begin{enumerate}
\item  If $(a,b)=(0,1)$, then $(R^1,R^2,Y)$ is $T^{(0,1)}$-invariant if and only if $\ID$ is distributed as in \eqref{model-IG}
\item If $(a,b)=(1,0)$, then $(R^1,R^2,Y)$ is $T^{(1,0)}$-invariant if and only if $\ID$ is distributed as in \eqref{model-G}
\item If $(a,b)=(1,-1)$, then $(R^1,R^2,Y)$ is $T^{(1,-1)}$-invariant if and only if either $\ID$ or $(R^2,R^1,1-Y)$ is distributed as in \eqref{model-B}
\item If $(a,b)=(-1,1)$, then $(R^1,R^2,Y)$ is $T^{(-1,1)}$-invariant if and only if $\ID$ is distributed as in \eqref{model-IB}.
\end{enumerate}
\end{proposition}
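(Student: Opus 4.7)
My plan is to take the ``if'' direction of each statement as already established in the citations attached to the four models \eqref{model-IG}--\eqref{model-IB}, and to prove the ``only if'' direction in each case by reducing the $T^{(a,b)}$-invariance to the hypothesis of either the Lukacs theorem (Theorem~\ref{thm-lukacs} and Corollary~\ref{corollary-Lu}) or the Seshadri--Wesolowski theorem (Theorem~\ref{thm-SeWe}). The common principle is that full mutual independence $R^1\perp R^2\perp Y$ combined with $T$-invariance forces $(\t{R}^1,\t{R}^2,\t{Y}):=T^{(a,b)}(R^1,R^2,Y)$ to also consist of three mutually independent random variables with the same respective marginals as $(R^1,R^2,Y)$; together with Lemma~\ref{lemma distr. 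TFAE} (which already delivers $R^2/R^1\perp\t{Y}$ and $\t{Y}\stackrel{d}{=}Y$), this will produce the exact independence needed once the right change of variables is chosen.

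For parts (a) and (b) the Lemma~\ref{lemma distr. TFAE} independence alone suffices. In case (a), $h(y)=y$, I substitute $A=1/R^1$ and $B=1/R^2$: then $R^2/R^1=A/B$ is a monotone image of $A/(A+B)$, and $\t{Y}=1/(A+B)$ is a monotone image of $A+B$, so $R^2/R^1\perp\t{Y}$ becomes $A/(A+B)\perp A+B$, and Lukacs forces $A$ and $B$ to be independent gammas with a common rate; matching $\t{Y}\stackrel{d}{=}Y$ pins down \eqref{model-IG}. In case (b), $h(y)=1$, I substitute $V=R^1$ and $U=1-1/R^2\in(0,1)$: then $\t{Y}=VU$ and $R^2/R^1=1/(V(1-U))$, so the independence becomes $VU\perp V(1-U)$, and Corollary~\ref{corollary-Lu} yields $V$ gamma and $U$ beta, giving \eqref{model-G} after a parameter match.

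Cases (c) and (d) require the stronger independence $\t{R}^1\perp\t{R}^2$ (a direct consequence of the mutual independence of the image triple) together with the Seshadri--Wesolowski theorem. For (c) in $D^+_{1,-1}$, I set $A=R^1,\,B=1/R^2\in(0,1)$ and compute $\t{R}^1=1-(1-Y)(1-AB)$ and $1/\t{R}^2=AB/\t{R}^1$; the choice $A_{\text{SW}}:=1-Y,\,B_{\text{SW}}:=1-AB$ (independent because $Y\perp(A,B)$) makes $D_{\text{SW}}=\t{R}^1$ and $C_{\text{SW}}=1/\t{R}^2$, so $\t{R}^1\perp\t{R}^2$ is exactly the SW hypothesis; the resulting beta marginals, combined with $\t{R}^1\stackrel{d}{=}R^1$ and $\t{R}^2\stackrel{d}{=}R^2$, give \eqref{model-B} with $(p,q,r)=(\beta,\mu,\lambda)$. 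For (d), I use $A=1-1/R^1,\,B=1-1/(R^2+1),\,\Psi=1-1/Y$ (all in $(0,1)$, mutually independent); a direct computation yields $1-1/\t{Y}=AB$ together with $1-1/\t{R}^1=\Psi(1-AB)/N$ and $1-1/(\t{R}^2+1)=N/(1-AB)$, where $N=B(1-A)+\Psi(1-B)$. Taking $A_{\text{SW}}:=(1-B)/(1-AB)$ and $B_{\text{SW}}:=1-\Psi$ (independent because $\Psi\perp(A,B)$) produces $D_{\text{SW}}=1-1/(\t{R}^2+1)$ and $C_{\text{SW}}=1-1/\t{R}^1$, so that $\t{R}^1\perp\t{R}^2$ is again the SW hypothesis; reading the resulting beta marginals back through the substitution yields \eqref{model-IB} with $(p,q,r)=(\lambda,\mu-\lambda,\beta)$.

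The $D^-$ alternative in case (c) is obtained from the $D^+$ analysis via the reflection procedure of Section~\ref{sec-modifications}, which interchanges $R^1\leftrightarrow R^2$ and $Y\leftrightarrow 1-Y$ while preserving $T^{(1,-1)}$-invariance (using $h^{-1}=h$ for $h(y)=1-y$); it carries the $D^-$ triple into a $D^+$ triple to which the $D^+$ argument applies directly. The genuinely nontrivial step throughout is discovering the correct SW substitutions in cases (c) and (d): the choices $B_{\text{SW}}=1-AB$ in (c) and $A_{\text{SW}}=(1-B)/(1-AB),\,B_{\text{SW}}=1-\Psi$ in (d) are what collapse the nonlinearity of the third coordinate of $T^{(a,b)}$ into the rigid bilinear SW form. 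Once these substitutions are in hand, every remaining step is algebraic verification, a direct appeal to one of the cited characterization theorems, or routine parameter matching.
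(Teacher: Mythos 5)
Your proposal is correct, and for parts (a) and (b) it coincides with the paper's argument: the paper also reduces $T$-invariance, via Lemma \ref{lemma distr. TFAE}, to the independence of $R^2/R^1$ and $\t{Y}=T_3^{(a,b)}(R^1,R^2)$ together with $\t{Y}\stackrel{d}{=}Y$, and then applies Theorem \ref{thm-lukacs} to $(1/R^1,1/R^2)$ and Corollary \ref{corollary-Lu} to $(R^1,1/R^2)$ with essentially your substitutions. For parts (c) and (d), however, you take a genuinely different route. The paper stays entirely within the Lemma \ref{lemma distr. TFAE} framework: it feeds the independence $R^2/R^1\perp\t{Y}$ into Theorem \ref{thm-SeWe} using substitutions built from $R^1,R^2$ alone (namely $(A,B)=((R^2)^{-1},R^1)$ on $D^+_{1,-1}$ and $(A,B)=(1-(R^1)^{-1},1-(R^2+1)^{-1})$ for $(-1,1)$, so that $(1-B)/(1-AB)$ and $1-AB$ become functions of $R^2/R^1$ and $\t{Y}$ respectively), and then recovers the law of $Y$ from $\t{Y}\stackrel{d}{=}Y$. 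You instead exploit the independence $\t{R}^1\perp\t{R}^2$ of the image coordinates, which requires substitutions that mix $Y$ into the Seshadri--Weso{\l}owski pair (your $A_{\mathrm{SW}}=1-Y$ in (c) and $B_{\mathrm{SW}}=1-\Psi=1/Y$ in (d)); I checked your algebraic identities $D_{\mathrm{SW}}=\t{R}^1$, $C_{\mathrm{SW}}=1/\t{R}^2$ in (c) and $D_{\mathrm{SW}}=1-1/(\t{R}^2+1)$, $C_{\mathrm{SW}}=1-1/\t{R}^1$ in (d), and they are correct, with $A_{\mathrm{SW}}=(1-B)/(1-AB)=(1+R^2/R^1)^{-1}$ indeed non-degenerate. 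Your treatment of the $D^-_{1,-1}$ subcase by reflection matches the paper's. What the paper's route buys is uniformity and economy: all four cases use only conditions (i)--(ii) of Lemma \ref{lemma distr. TFAE}, the substitutions never involve $Y$, and both implications of the ``if and only if'' fall out of a single chain of equivalences. What your route buys is that Seshadri--Weso{\l}owski hands you the law of $Y$ directly (rather than via $\t{Y}\stackrel{d}{=}Y$), at the cost of heavier algebra and of having to import the ``if'' direction from the cited references, since $\t{R}^1\perp\t{R}^2$ is only a necessary consequence of $T$-invariance, not equivalent to it. Both are complete proofs; do make explicit in (b) that $R^2>1$ (so $U\in(0,1)$), which as in the paper follows from Lemma \ref{lemma-involution-properties} and $D^-_{1,0}=\emptyset$.
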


\begin{proof}
Observe that $T^{(a,b)}_3$ has no $y$-dependence.   Thus, by Lemma \ref{lemma TFAE}, $T^{(a,b)}$ is the unique polymer involution adapted to $h$ on $\Os$.
By Lemma \ref{lemma distr. TFAE}, $(R^1,R^2,Y)$ is $T^{(a,b)}$-invariant if and only if the following two properties hold:  
\begin{itemize}
\item[(i)] $\frac{R^2}{R^1}$ is independent of $T^{(a,b)}_3(R^1,R^2)$.
\item[(ii)] $Y \stackrel{d}{=} T^{(a,b)}_3(R^1,R^2)$.
\end{itemize}
Recall that
\[
T_3^{(a,b)}(R^1,R^2)=\frac{R^1(R^2-a)}{R^2+bR^1}.
\]

We now prove (a).
Put $(A,B):=\left((R^1)^{-1},(R^2)^{-1}\right)$.  Then $(A,B)$ are non-degenerate independent positive random variables.  Now 
\[\frac{R^2}{R^1}=\frac{A}{B}\quad \text{and}\quad T^{(0,1)}_3(R^1,R^2)=(A+B)^{-1}.\]
So (i) holds if and only if $A/(A+B)=\left(1+B/A\right)^{-1}$ is independent of $A+B$.  By Theorem \ref{thm-lukacs}
this occurs if and only if there exist positive constants $\lambda_A,\lambda_B,\beta$ such that $(A,B)\sim \text{Ga}(\lambda_A,\beta)\otimes \text{Ga}(\lambda_B,\beta)$. In such a case, $A+B=C\sim \text{Ga}(\lambda_A+\lambda_B,\beta)$. Thus $T^{(0,1)}_3(R^1,R^2)=(A+B)^{-1}\sim \text{Ga}^{-1}(\lambda_A+\lambda_B,\beta)$. Now put $(\mu,\lambda)=(\lambda_A+\lambda_B,\lambda_B)$ and use (ii) to get $\ID \sim \text{Ga}^{-1}(\mu-\lambda,\beta)\otimes \text{Ga}^{-1}(\lambda,\beta)\otimes \text{Ga}^{-1}(\mu,\beta)$.  This completes the proof of (a).

We now prove (b). Notice that $D_{1,0}^{-}=\emptyset$. Therefore by Lemma \ref{lemma-involution-properties} we have that $\ID$ takes values in $D_{1,0}^{+}=(0,\infty)\times (1,\infty)\times (0,\infty)$.  Put $(A,B):=\left(R^1,(R^2)^{-1}\right)$.  Then $(A,B)$ are non-degenerate independent random variables taking values in $(0,\infty)\times (0,1)$.  
Now \[\frac{R^2}{R^1}=\frac{1}{AB}\quad \text{and}\quad T^{(1,0)}_3(R^1,R^2)=A(1-B).\] So (i) holds if and only if $AB$ is independent of $A(1-B)$.  By Corollary \ref{corollary-Lu},
this occurs if and only if there exist positive constants $\lambda_A,\lambda_B,\beta$ such that $(A,B)\sim \text{Ga}(\lambda_A+\lambda_B,\beta)\otimes \text{Be}(\lambda_A,\lambda_B)$. In such a case, $T^{(1,0)}_3(R^1,R^2)=A(1-B)=D\sim \text{Ga}(\lambda_B,\beta)$.
Now put $(\mu,\lambda)=(\lambda_B,\lambda_A)$ and use (ii) to get $\ID \sim  \text{Ga}(\mu+\lambda,\beta)\otimes \text{Be}^{-1}(\lambda,\mu)\otimes \text{Ga}(\mu,\beta)$. This completes the proof of (b).

We now prove (c).  By Lemma \ref{lemma-involution-properties}, $\ID$ either takes values in 
\[
D_{1,-1}^{+}=(0,1)\times (1,\infty)\times (0,1) \quad \text{ or } \quad D_{1,-1}^{-}=(1,\infty)\times (0,1)\times (0,1).
\]
First consider the case when $\ID$ takes values in $D^{+}_{1,-1}$.  Put $(A,B):=((R^2)^{-1},R^1)$.  Then $(A,B)$ are non-degenerate independent random variables, both taking values in $(0,1)$.  Now
\[
\frac{R^2}{R^1}=\frac{1}{AB}\quad \text{and}\quad
T^{(1,-1)}_3(R^1,R^2)=1-\frac{1-B}{1-AB}.
\]
So (i) holds if and only if $1-AB$ is independent of $(1-B)/(1-AB)$.  By Theorem \ref{thm-SeWe},
this occurs if and only if there exist positive constants $p,q,r$ such that
$(A,B)\sim \text{Be}(p,q)\otimes \text{Be}(p+q,r).$
In such a case,
$1-T^{(1,-1)}_3(R^1,R^2)=(1-B)/(1-AB)=C\sim \text{Be}(r,q).$
Thus
$T^{(1,-1)}_3(R^1,R^2)\sim 1-\text{Be}(r,q)=\text{Be}(q,r)$.
Now put $(\mu,\lambda,\beta)=(q,p,r)$ and use (ii) to get 
$\ID \sim  \text{Be}(\mu+\lambda,\beta)\otimes \text{Be}^{-1}(\lambda,\mu)\otimes \text{Be}(\mu,\beta)$.

In the case where $\ID$ takes values in $D^-_{1,-1}$, applying the reflection procedure as in \eqref{equation T hat}, one can check that $\widehat{T}=T^{(1,-1)}$ and the resulting random variables $(\widehat{R}^1,\widehat{R}^2,\widehat{Y})=(R^2,R^1,1-Y)$ take values in $D^+_{1,-1}$. By the first case, we are done. This completes the proof of (c).
  
We now prove (d).  Notice that $D^{-}_{-1,1}=\emptyset$. Therefore by Lemma \ref{lemma-involution-properties} $\ID$ must take values in $D^{+}_{-1,1}=(1,\infty)\times (0,\infty)\times (1,\infty)$.  Put $(A,B):=(1-(R^1)^{-1},\,1-(R^2+1)^{-1})$.  Then $(A,B)$ are non-degenerate independent random variables, both taking values in $(0,1)$. Therefore
\[
\left(1+\frac{R^2}{R^1}\right)^{-1}=\frac{1-B}{1-AB}\quad \text{and} \quad
T^{(-1,1)}_3(R^1,R^2)=\frac{1}{1-AB}.
\]
So (i) holds if and only if $(1-B)/(1-AB)$ is independent of $1-AB$.  By Theorem \ref{thm-SeWe}, this occurs if and only if there exist positive constants $p,q,r$ such that $(A,B)\sim \text{Be}(p,q)\otimes \text{Be}(p+q,r)$.
In such a case, $T^{(-1,1)}_3(R^1,R^2)=(1-AB)^{-1}=D^{-1}\sim \text{Be}^{-1}(r+q,p)$.
Now put $(\mu,\lambda,\beta)=(r+q,r,p)$ and use (ii) to get $\ID \sim   \text{Be}^{-1}(\mu-\lambda,\beta)\otimes (\text{Be}^{-1}(\lambda,\beta+\mu-\lambda)-1) \otimes \text{Be}^{-1}(\mu,\beta)$.  This completes the proof of (d).
\end{proof}

We now prove the first main result.
\begin{proof}[Proof of Theorem \ref{thm-classify-linear}]
When $h(y)=a+by$, for all fixed $(r_1,r_2)\in O_1\times O_2$  the mapping $y\mapsto T_2^h(r_1,r_2,y)=y(\frac{r_2}{r_1}+b)+a$ is injective whenever $b\geq 0$.  In the case $b<0$ and $a>0$ this injectivity follows from the assumption $-b\not\in \{\frac{y}{x}:(x,y)\in O_1\times O_2\}$.  Therefore the conditions of Proposition \ref{prop-invar-tinvar-eq}-(a) are satisfied in all cases, which gives the existence of a unique polymer involution $T^h$ adapted to $h(y)=a+by$ such that $(R^1,R^2,Y)$ is $T^h$-invariant.  By \eqref{T=G tensor id...}, $T^h=T^{(a,b)}$  as defined in \eqref{T-ab}.  Now applying Lemma \ref{lem-scaling} then Proposition \ref{prop fundamental 1} completes the proof. 
\end{proof}

\bibliographystyle{plain}
\bibliography{11182016pft.bbl}

\end{document}